\newtheorem{theoremm}{Theorem}
\newtheorem{corol}[theoremm]{Corollary}
\newtheorem*{theorem}{Theorem}
\newtheorem{assumptionB}{Biological Assumption}
\newtheorem{assumptionM}{Mathematical Assumption}
\newtheorem*{Bass3S}{Biological Assumption 3 for serine recombinases}
\newtheorem*{Bass3T}{Biological Assumption 3 for tyrosine recombinases}
\newtheorem*{Mass3S}{Mathematical Assumption 3 for serine recombinases}
\newtheorem*{Mass3T}{Mathematical Assumption 3 for tyrosine recombinases}
\theoremstyle{lemma}
\newtheorem{lemma}{Lemma}
\theoremstyle{plain}
\newtheorem*{claim}{Claim}
\theoremstyle{definition}
\newtheorem*{definition}{Definition}
\theoremstyle{remark}
\newtheorem{case}{\textit{Case}}
\begin{document}

\title{Characterization of Knots and Links Arising from Site-specific Recombination on Twist Knots}

\author {Dorothy Buck$^1$ and Karin Valencia$^2$}
\date{\today}
\maketitle
\begin{center}
\small{Imperial College London, Department of Mathematics\\Postal address: Imperial College London, South Kensington Campus, Department of Mathematics,  London SW7 2AZ, England
\\(1) e-mail: d.buck@imperial.ac.uk, Telephone number: +44 (0)20 758 58625\\(2) (corresponding author) e-mail: karin.valencia06@imperial.ac.uk}
\end{center}

\begin{abstract}
 We develop a model characterizing all possible  knots and links arising from recombination starting with a twist knot substrate, extending previous work of Buck and Flapan.   We show that all knot or link products fall into three well-understood families of knots and links, and prove that given a positive integer $n$, the number of product knots and links with minimal crossing number equal to $n$ grows proportionally to $n^5$. In the (common) case of twist knot substrates whose products have minimal crossing number one more than the substrate, we prove that the types of products are tightly prescribed.  Finally, we give two simple examples to illustrate how this model can help determine previously uncharacterized experimental data.
\end{abstract}


\section{Introduction} \label{intro}

\noindent The central axis of the famous DNA double helix can become knotted or linked as a result of numerous biochemical processes, most notably site-specific recombination \cite{MaxwellBates, wang, mobileDNA}.  A  wide variety of DNA knots and links have been observed  \cite{vinograd, Wasserman-Cozzarelli1985, Cozzarelli-et-al,4,5,6,7,8,9,10,11,12,13}.  Characterising the precise knot or link type can often help understand structural or mechanistic features of the biochemical reaction \cite{ES, darcy, Sumners-et-al, 10, vazquez-gin, vazquez-et-al, d3, d4, v1, k1, k2 ,d5}.
\\\\Experimentally, such a characterization is typically achieved via gel electrophoresis (which stratify DNA products according to their minimal crossing number) \cite{15} and electron microscopy (which allows us to visualize the over- and under-crossings of the DNA molecule) \cite{14,16} together with knot invariants such as the Jones polynomial (amongst many others) \cite{jonespoly}. However, electron microscopy is not straightforward and often the precise over- or under-crossing cannot be categorically determined. Partial information can be gleaned by using gel electrophoresis but as there are 1,701,936 prime knots with minimal crossing number less than 17 this information is not sufficient \cite{numberofknots}. Furthermore, gel electrophoresis does not distinguish between handedness of chiral knots, so this does not give the full picture.  Thus topological techniques, such as those presented here, can aid experimentalists in characterizing DNA knotted and linked molecules by restricting the types of knots or links that can arise in a particular context.
\\\\Here we focus on the most common biochemical reaction that yields DNA knots and links: \textit{site-specific recombination}. Site-specific recombination is an important cellular reaction that has been studied extensively since the 1960s. It involves a reciprocal exchange between defined DNA segments. Biologically, this results in a variety of processes (see \cite{bio} and references therein). Apart from their fundamental functions in the cell, site-specific recombinases give scientists an elegant, precise and efficient way to insert, delete, and invert segments. Thus they are rapidly becoming of pharmaceutical and agricultural interest as well as  being used in the development of biotechnological tools \cite{mouse, tools}.
\\\\Twist knots are one of the most common DNA conformations. This is not surprising as in the cell most DNA is (plectonemically) supercoiled (like an over-used phone cord) and in the lab most experiments done with site-specific recombinases use small (plectonemically) supercoiled circular DNA molecules, so an unknot can be transformed to a twist knot by a single crossing change (see Figure \ref{supercoiledtotwist}).  
Unlike $(2,n)$-torus knots, twist knots occur as knots (not links) for both odd and even minimal crossing number, MCN$(K)$.  Thus ubiquitous DNA twist knots arise as a result of a variety of site-specific recombination reactions \cite{mobileDNA,Wasserman-Cozzarelli1985,Cozzarelli-et-al,4,5,6,7,8}.  
\\\\
Despite the biological importance of this twist knot family, there has yet to be a systematic model incorporating these as substrates for a generic site-specific recombinase.   (Earlier predictions of knots arising from site-specific recombination did not consider twist knots \cite{BFmaths, BFbio,SECS}).  Here we rectify this by presenting a model, extending the work of \cite{BFmaths}, classifying all possible knots and links that can arise from site-specific recombination on a twist knot. 
\\\\Our model is built on three assumptions for which biological evidence is provided in \cite{BFbio, KDbio}. We  construct a model that predicts
all possible knots and links that can arise as products of a single round of recombination, multiple rounds of
(processive) recombination, and of distributive recombination, given a twist knot
substrate $C(2, v)$ and our three assumptions. We predict that products arising from site-specific recombination on a twist knot substrate $C(2, v)$ must be members of the three families of products illustrated in Figure \ref{alternative family}. Members of these families
of knots and links include prime and composite knots and links with up to three components (see Section 2.3). Our model can
also distinguish between the chirality of the  product molecules of site-specific recombination (see Section 5). Our model is independent of
site orientation, and we make no assumption on the number of base pairs of the molecule(s).

\subsection{Structure of our paper}
\noindent This paper is organised as follows: in Section \ref{biodefs} we give a concise introduction to site-specific recombination and introduce notation. In Section \ref{assumptions} we state and explain the three assumptions about the recombinase complex, the substrate and the mechanisms of recombination. (Biological justifications for these assumptions can be found in \cite{BFbio,KDbio}). In Section \ref{lemmas} we determine the pre-recombinant and post-recombinant conformations of the recombination sites and all possible conformations of the DNA-protein complex; we also prove the necessary background lemmas for Section \ref{theorems}. In Section \ref{theorems} we prove Theorems 1 and 2 which determine all the putative DNA knot and link products of (non-distributive) site-specific recombination on a twist knot substrate. We show that these products belong to one of the three families of knots and links illustrated in Figure \ref{alternative family}. (These families of knots and links are defined in section 2.3). We also identify all knots and links that arise as products of distributive site-specific recombination. In Section \ref{MCN}, we prove Theorem 4 which shows that all the possible DNA knot and link products of site-specific recombination on a twist knot substrate are a very small fraction of all knots and links. We also further restrict the knot and link types of products that have minimal crossing number one more that of the substrate. 
Finally, in Section \ref{applications} we consider two simple uses of our model. For a detailed biological discussion of applications of our model, and how to use this model as a tool in a variety of site-specific recombination systems, we refer the reader to \cite{KDbio}.

\section{Biological systems and terminology}\label{biodefs}
\noindent In this section we give a concise introduction to site-specific recombination, introduce notation, and describe the families of knots and links that arise as products of site-specific recombination on twist knot substrates.

\subsection{Site-specific recombination}
\noindent Site-specific recombination reshuffles DNA sequences by inserting, deleting or inverting DNA segments of arbitrary length. As such, it mediates a variety of important cellular processes including chromosome segregation and viral infections. (See the review \cite{bio} for more details). 
Minimally, site-specific recombination requires both particular proteins (\textit{site-specific recombinases}) and two short (30--50bp) DNA segments (the \textit{crossover sites}) within one or two  DNA molecules (\textit{the substrate}). (More complex site-specific recombination systems may also require additional proteins (called \textit{accessory proteins}) and DNA sites (called \textit{enhancer sequences}).)  Site-specific recombinases can be broadly divided into two subfamilies: serine site-specific recombinases and tyrosine site-specific recombinases, based on their catalytic residues.
\\\\Site-specific recombination roughly has three stages (see Figure \ref{interstep}). First, two recombinase molecules bind to each of two crossover sites and bring them close together. (The sites together with the four bound recombinases is called the \textit{synaptic complex}.) Second, the crossover sites are cleaved, exchanged and resealed. (The precise nature of this intermediary step is determined by the  recombinase subfamily, see Assumption 3 and Figures \ref{serinesbio} and \ref{tyrosinesbio}.). And finally, the rearranged DNA (\textit{the product}) is released.
\\\\Multiple rounds of strand exchange can occur before releasing the DNA:  this process is known as \textit{processive recombination}.  (See Assumption 3 and Figure \ref{serinesmaths}.) This is in contrast to \textit{distributive recombination}, where multiple rounds of the entire process of recombination (including releasing and rebinding) occurs. Only serine recombinases can mediate processive site-specific recombination, but both types of recombinases can mediate distributive recombination.  In this work we use the term substrate to refer specifically to the DNA prior to the first cleavage. We treat processive recombination  as one extended process (with several intermediate exiting points for the reaction).

%

\subsection{Mathematical terminology}
\noindent A \textit{twist knot} is a knot that admits a projection with a row of $v\neq0$ vertical crossings and a \textit{hook}, as in Figure \ref{substrate} and is denoted by $C(\pm2,v)$. If $r=-2$, by flipping the top loop we get $r=+2$ and add a positive crossing to the row of $v$ crossings (see this isotopy illustrated in Figure \ref{substrateisotopy}). Thus from now on we assume that our substrate is the twist knot $C(2,v)$, $v\neq0$. (See \cite{Rolfsen, BZ, Cromwell, Kawauchi, Murasugi} for a detailed discussion on twist knots).
\\\\\textbf{Note:}  Twist knots can be generalized  to \textit{clasp knots}. A clasp knot $C(r,v)$ is a knot that has two non-adjacent rows of crossings, one with $r\neq0,\pm1$ crossings and the other with $v\neq0$ crossings (Figure \ref{claspknot}). (By adjacent rows of $r$ and $v$ crossings we mean that the two rows cannot be considered as a single row of $r + v$ crossings as they can in the case of the torus knots and links $T(2, r + v)$. A clasp knot $C(r, v)$ with $r = \pm2$ is a twist knot.)
\\\\We use the following terminology and notation. We consider the central axis  of the DNA double helix and therefore, when we illustrate DNA molecules we draw this axis (and not the two DNA backbones that make up the double helix). Let $J$ denote the twist knot substrate molecule. Once the synaptic complex has been formed, the \textit{recombinase complex}, $B$, denotes the convex hull of the four bound recombinase molecules together with the two crossover sites. (Note that $B$ is a topological ball). The \textit{recombinase-DNA complex, $J\cup B$, denotes the union of the substrate $J$ with the recombinase complex $B$}. Let $C=\mbox{cl}(\mathbb{R}^3-B)$ and let  $C\cap J$ denote the complement of the recombinase complex. If the recombinase complex meets the substrate in precisely the two crossover sites then we say the recombinase complex is a \textit{productive synapse}, see Figure \ref{productivesynapse}. In particular, for recombinases that utilize an enhancer sequence or accessory proteins,  the recombinase complex is a productive synapse if the accessory sites and proteins are sequestered from the crossover sites.\footnote{Note that if $B$ is a productive synapse then it can be thought of as a (2-string) tangle.  However, unlike in the traditional tangle model, the complement of $B$ may take a variety of forms (not necessarily that of a tangle), so we avoid this potentially confusing terminology.}

\subsection{Notation for families of knots and links that arise from site-specific recombination on a twist knot.}\label{familiesexplained}
\noindent We now discuss the three families of knots and links that we encounter in the main results of this paper. The families of knots and links illustrated in Figures \ref{family}, \ref{famsG1} and \ref{famsG2} are referred to as $F(p,q,r,s,t,u)$, $G1$ and $G2$ respectively.
\\\\ We note that $F(p,q,r,s,t,u)$ is a special family of knots and links. In \cite{Conwaytangle} it is shown that a standard rational tangle diagram corresponds to any expansion of a rational
number $\frac{p}{q}$ as a continued fraction; in this paper we choose the convention that a choice of the expansion in which all terms
have alternating sign gives an alternating diagram (see Figure \ref{crossings} for a convention on crossings and Figure \ref{montesinoss}b for an example of a rational tangle diagram).  A \textit{Montesinos link} is a link $L$ that admits a diagram $D$ composed of $m\geq3$ rational tangle diagrams $R_1,...,R_m$ and $k\geq0$ half twists glued together as in Figure \ref{montesinoss}a (and see e.g. \cite{LickThist}). Members of $F(p,q,r,s,t,u)$ are obtained by the numerator closure of Montesinos tangles of the form $(\frac{t}{tu+1},\frac{r}{rs+1},\frac{p}{pq+1})$. That is, for three standard rational tangle diagrams with fractions $\frac{t}{tu+1},\frac{r}{rs+1}$ and $\frac{p}{pq+1}$, take their partial sum as in Figure \ref{montesinoss}c and then the closure of the diagram as in Figure \ref{montesinoss}d. Denote the tangle with corresponding rational number $\frac{t}{tu+1}, \frac{r}{rs+1}$ and $\frac{p}{pq+1}$ by  $R_1,R_2$ and $R_3$ respectively. As in \cite{BFmaths}, we define the family of \textit{small Montesinos knots and links to be the family of Montesinos links for $i=3$, $R_i$ as above and $k=0$. Thus, our family of knots and links $F(p,q,r,s,t,u)$ illustrated in Figure \ref{family} is a subfamily of small Montesinos knots and links. }
\\\\ In the family $F(p,q,r,s,t,u)$ of knots and links, the variables $p,q,r,s,t,u$ describe the number of crossings between two strands in that particular row of crossings. Note that knots that are members of this family can be prime or composite and links belonging to this family can have up to three components. In this family, the variables $p, q, r, s, t, u$ can be positive, negative or zero. By letting the variables equal $0$ or $\pm1$ as appropriate, we obtain the subfamilies illustrated in Figure \ref{subfamilies}. Subfamily 1 is denoted by $F_{S_1}(0,q,r,s,t,u)$  with $|r|>0,|t|>1$. Subfamily 2 is denoted $F_{S_2}(\pm1,q,r,s,\pm1,u)$ with $|r|>1$. Subfamily 3 is denoted $F_{S_3}(\pm1,q,r,s,t,u)$ with $|r|, |t|>1$. Subfamily 4 is denoted $F_{S_4}(p,q,r,s,t,u)$ when we forbid $p,t,r=\{0,\pm1\}$. Subfamily 5 are composite knots or links $T(2, u)\sharp C(p,q)$ formed from a torus knot and a twist knot. Subfamily 6 is a subfamily of $F(p,q,r,s,t,u)$ with $p+q=0$. Subfamily 7 is a family of clasp knots and links, $C(r, s)$. (Recall that it is a generalization of the family of twist knots, which we consider in this paper as the substrate molecule for site-specific recombination.) Subfamily 8 is the family of torus knots and links, $T(2,r)$. Finally, subfamily 9 is the family of pretzel knots $K(p,s,u)$. (Note that some of the subfamilies in Figure \ref{subfamilies} are special cases of other subfamilies, for example in Subfamily 2, if we let $q=0$ then we get Subfamily 5. Similarly for Subfamilies 1 and 7, 3 and 6.)
\\\\In the families $G_1(k)$ and $G_2(k)$ of knots and links, the variable $k$ describes the number of crossings between the two strands. Depending on the value of $k$, we obtain either a knot or a link:
For $G_i(k)$ for $i=1,2$, if $k$ is odd, the members of these families are knots. If $k$ is even, then the members of this family are two component links. These families are illustrated in Figures \ref{famsG1} and \ref{famsG2}.
\\\\Note that there are a few knots and links that belong to both $F(p, q, r, s, t, u)$ and either $G_1(k)$ or $G_2(k)$. For example the trefoil knot has a projection as a member of $F(p,q,r,s,t,u)$ with $p=0,t,u=1,r=2,s=-1$, and a projection as a member of $G_2(k)$ with $k = 2$.

\section{Assumptions} \label{assumptions} \label{assumtions1}

\noindent In this section we state and explain the three assumptions about the recombinase complex, the substrate and the mechanisms of recombination. (Biological justifications for these assumptions can be found in \cite{BFbio,KDbio}).
\\\\We make the following three assumptions about the recombinase-DNA complex, which we state in both biological and mathematical terms. These assumptions are similar in \cite{BFmaths, BFbio}. However, for Assumption 2 in particular, we introduce new terminology and prove a necessary result in order to re-state this Biological Assumption in precise mathematical terms. In \cite{BFbio, KDbio} we provide experimental evidence showing that each of these assumptions is biologically reasonable. 

\begin{assumptionB} The synaptic complex is a productive synapse, and there is
a projection of the crossover sites which has at most one crossing between the sites and no
crossings within a single site.
\end{assumptionB}

\begin{assumptionM}
$B\cap J$ consists of two arcs and there is a projection of
$B\cap J$ which has at most one crossing between the two arcs, and no crossings within a single arc.
\end{assumptionM}

\noindent Fix a projection of $J$ such that $B\cap J$ has one of the forms illustrated in Figure \ref{ass1}. Observe that form $B1$ can be rotated by $90^{\circ}$ to obtain form $B2$. However, we list form $B1$ and $B2$ as two different forms to make subsequent figures easier to follow (similarly for forms $B3$ and $B4$). 
\\\\Note that hooked productive synapses, illustrated in Figure \ref{crossings}, are biologically possible because there exist many recombinases whose productive synapse is not characterized, and for these systems $B\cap J$ could be hooked. However, this does not contradict Assumption 1, since a hook has no projections with no crossings, but has projections where there is only one crossing. There is an isotopy of the substrate molecule taking a hook from a projection with two crossings to a projection with one crossing, without affecting the projection of the substrate molecule outside a neighbourhood of the hook (Figure \ref{neighbourhood of B inter J} illustrates this).

\begin{assumptionB} The synaptic complex does not pierce through a supercoil or a
branch point in a nontrivial way and the supercoiled segments are closely juxtaposed. Also,
no persistent knots or links are trapped in the branches of the DNA on the outside of the synaptic complex.
\end{assumptionB}

\noindent Here \textit{persistent knots or links}  are those that remain after a continuous deformation of the DNA molecule, keeping $B$ fixed. 

\noindent Before we can state Assumption 2 mathematically, we need to introduce some terminology.
\\\\We define a \textit{planar surface with twists} as in \cite{BFmaths}.  Consider a surface lying in a plane together with a finite number of arcs in the surface whose endpoints are on the boundary of the surface (see Figure \ref{plannarsurface}$(a)$). We can use this planar surface with arcs to obtain a non-planar surface by replacing a neighborhood of each arc in the original surface by a half-twisted band and removing the top and bottom ends of the band (see Figure \ref{plannarsurface}$(b)$). Figure \ref{plannarsurface} illustrates how such a surface can be obtained from a doubly-punctured planar disc together with a collection of arcs defining the twists. A \textit{planar surface with twists} is defined to be any surface which can be obtained from a planar surface in this way. 
\\\\Define a surface $D$ with boundary $J$ to be a  \textit{spanning surface for $J$} if $D$ is topologically equivalent to a \textit{doubly-punctured  planar disc with twists}  when $J$ is a twist knot (Figure \ref{plannarsurface}). (We can think of a spanning surface for $J$ as a soap film surface with boundary $J$.) In the construction of this spanning surface, in Figure \ref{plannarsurface}$(a)$, we choose the twisted band that replaces the arc connecting the boundary of the planar disc and the right-most puncture and the twisted band that replaces the arc connecting the two punctures, such that the corresponding crossings defined  on the non-planar surface with twists make a set of $+2$ horizontal crossings that we call a \textit{clasp}, illustrated in Figures \ref{plannarsurface} and \ref{spanningsurfacee}.
\\\\Figure \ref{pierce} shows examples of the relationship between the spanning surface $D$ and the recombinase complex. Observe that in illustrations (i) and (ii) $D\cap\partial B$ consists exactly two arcs. In illustration (iii), no matter how the spanning surface $D$ is chosen, $D\cap \partial B$ contains at least one circle as well as two arcs, whereas in illustration (iv) there is an isotopy that removes the circle in $D\cap \partial B$. Mathematically, a spanning surface $D$ is \textit{pierced non-trivially} by $B$ if and only if $D\cap\partial B$ contains at least one circle in addition to the required two arcs, and there is no ambient isotopy of $D$ that removes this additional circle.

\begin{claim} The intersection of any spanning surface for $J$ and $\partial B$ contains exactly two arcs.
\end{claim}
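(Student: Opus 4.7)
The plan is to combine Mathematical Assumption 1 with a standard transversality and manifold-counting argument on the sphere $\partial B$. By Mathematical Assumption 1, $B\cap J$ consists of two disjoint arcs, each with two endpoints on $\partial B$. Hence, assuming $J$ meets $\partial B$ transversely (which we may arrange by a small ambient isotopy if necessary), the set $J\cap \partial B$ consists of exactly four points.

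Next, I would put the spanning surface $D$ in general position with respect to $\partial B$ while keeping $\partial D = J$ fixed near $\partial B$. Then $D\cap \partial B$ is a compact $1$-submanifold of the sphere $\partial B$ whose boundary equals $\partial D \cap \partial B = J\cap \partial B$, a set of $4$ points. Every compact $1$-manifold with boundary is a disjoint union of arcs and circles, and if $D \cap \partial B$ has $a$ arc components and $c$ circle components, then arcs contribute $2$ boundary points each and circles contribute none, so $2a = 4$ and therefore $a = 2$. This gives the claim.

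The only mild obstacle I foresee is justifying that general position can always be arranged for an arbitrary spanning surface. Away from $J$ the planar-surface-with-twists is smooth, so an arbitrarily small interior perturbation makes it transverse to $\partial B$; near $J$ the transversality is inherited from the transversality of $J$ with $\partial B$ ensured by Assumption 1. Note that the claim is silent on the number of circle components $c$: distinguishing essential circles from those that can be removed by an isotopy of $D$ is precisely what the subsequent definition of being \emph{pierced non-trivially}, and the forthcoming mathematical version of Assumption 2, are designed to address.
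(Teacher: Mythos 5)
Your proof is correct and takes essentially the same route as the paper's: Mathematical Assumption 1 forces $\partial B\cap J$ to consist of four points, and these four boundary points force exactly two arc components of $D\cap\partial B$. The only difference is that you make the general-position and $1$-manifold counting explicit and deliberately defer the circle components, whereas the paper additionally invokes Biological Assumption 2 inside the proof to conclude that any circles in $D\cap\partial B$ are removable by an ambient isotopy.
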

\label{claim}

\begin{proof} By Assumption 1, $B$ contains exactly two arcs of $J=\partial D$, thus $\partial B\cap J$ is precisely four points. It follows that the intersection of any spanning surface for $J$ with $\partial B$ contains exactly two arcs, whose endpoints are the four points $\partial B\cap J$. By Biological Assumption 2, $B$ does not pierce the interior of any spanning surface $D$ in a non-trivial way. Thus $D\cap\partial B$ consists of exactly two arcs and no circles that cannot be removed by an ambient isotopy of $D$. 
\end{proof}

\noindent Suppose that $D$ is a spanning surface for $J$. We know by Biological Assumption 2 that the supercoiled segments of the DNA molecule $J$ are closely juxtaposed, this means that we can visualize the spanning surface $D$ as a narrow soap film surface. In particular, this means that the {two arcs in $D\cap \partial B$ are each very short, so we can assume that they are co-planar. (Note that this does not mean that the crossover sites themselves ($\partial D\cap B$) are co-planar). 
\\\\We can now define a surface $D\cap C$ to be \textit{unknotted relative to $\partial B$} if there is an ambient isotopy of $C$, point-wise fixing $\partial B$, that takes $D\cap C$ to a doubly-punctured planar disc with twists, where the end points of the arcs defining the twists are disjoint from $\partial B$.  Illustration (ii) of Figure \ref{knotted v unknoted} shows an example of $D\cap C$ unknotted relative to $\partial B$. Illustration (i) shows a knot trapped in the substrate molecule outside of $B$.
\\\\We are now ready to state Assumption 2 mathematically.

\begin{assumptionM} $J$ has a spanning surface $D$ such that $D\cap\partial B$ consists of exactly two arcs, the two arcs are co-planar and $D\cap C$ is unknotted relative to $\partial B$.
\end{assumptionM}

\noindent The fact that $J$ has a spanning surface $D$ satisfying Assumption 2 means our model is independent of the projection of the substrate $J$, so we now fix a projection of $J$ as in Figure \ref{substrate} and from now on we work with this particular projection $J$. Note that here we are referring specifically to the substrate $J$ before the synaptic complex is formed. The conformations of the pre-recombinant recombinase-DNA complex are dealt with in Section \ref{lemmas}. 
\\\\Recall site-specific recombinases fall into two subfamilies, the serine recombinases and the tyrosine recombinases. The details of the mechanism differ depending on which subfamily the recombinase belongs to. Assumption 3 addresses the mechanism for each subfamily of recombinases.

\begin{Bass3S} Serine recombinases perform recombination via the 'subunit
exchange mechanism.' This mechanism involves making two simultaneous double-stranded
breaks in the sites, rotating two recombinase molecules in opposite sites by $180^{\circ}$ within the productive synapse and resealing the new DNA partners (Figure \ref{serinesbio}). In each subsequent round of processive recombination, the same set of subunits is exchanged and the sense of rotation remains constant.
\end{Bass3S}
\begin{Bass3T}After recombination mediated by a tyrosine recombinase, there is a projection of the crossover sites which has at most one crossing (Figure \ref{tyrosinesbio}).
\end{Bass3T}

\noindent The mathematical statement is as follows:

\begin{Mass3S} After (each round of processive) recombination mediated by a serine recombinase, there is precisely one additional crossing between the crossover sites. (see Figure \ref{serinesmaths}).
\end{Mass3S}
\begin{Mass3T}
After recombination mediated by a tyrosine recombinase, there is a projection of the crossover sites which has at most one crossing (Figure \ref{ass1}).
\end{Mass3T}

\section{Possible forms of the productive synapse and its complement}\label{lemmas}

\noindent In this section we determine the pre-recombinant and post-recombinant conformations of the recombination sites and all possible conformations of the DNA-protein complex.  We also prove the necessary background lemmas for Section \ref{theorems}.
\subsection{Possible forms of the productive synapse $\mathbf{B\cap J}$.}\label{assumtions2}
\noindent As a result of Assumption 2, we have fixed a projection of $J$ prior to cleavage such that $B\cap J$ has form $B1$, $B2$, $B3$ or $B4$, illustrated in Figure \ref{ass1}. It follows from Assumption 3  that after $n$ recombination events with serine recombinases, we have added a row of  either $n-1,n$ or $n+1$ identical crossings that can be positive, negative or zero. Without loss of generality, we assume that after $n$ recombination events with serine recombinases, we add a row of $n$ identical crossings that can be positive, negative or zero. Thus after $n$ recombination events our fixed projection of $B\cap J$ is isotopic to one of the forms $n1$ or $n2$ illustrated in Figure \ref{serinesmaths}. (Note that from $n1$ we can obtain $n2$ by rotating by $90^{\circ}$. However, we list them as separate forms in order to make it easier to follow the use of Figure \ref{serine products} in the proof of Theorem 2.)
\\\\For tyrosine recombinases, without loss of generality we assume that the post-recombinant projection of $B\cap J$ has one of the eight forms in Figure \ref{tyrosinesmaths}. Notice that conformations $B5,B6,B7$ and $B8$ are hooks. Hooks have no projections with no crossings but do have projections with one crossing, so we allow these conformations. Forms $B1, B3, B5$ and $B7$ are equivalent by a $90^{\circ}$ rotation, to forms $B2, B4, B6$ and $B8$ respectively. (We list them separately to make it easier to follow the use of Figure \ref{tyrosine products} in the proof of Theorem 1.) 

\subsection{Possible forms of the complement of the productive synapse $\mathbf{C\cap J}$}\label{prerecombinant characterization}
In this section we determine all the possible conformations of $C\cap J$, and determine the respective pre-recombinant conformations of  $B\cap J$ for each form of $C\cap J$. For simplicity, we will use the phrase `$C\cap J$ has a particular form' when we mean that `$C\cap J$ is ambient isotopic, pointwise fixing $\partial B$, to that form'. The forms of $C\cap J$ referred to in the lemma are illustrated in Figure \ref{all the possibilities for substrate}. 

\begin{lemma} Suppose that Assumptions $1$, $2$ and $3$ hold for a particular recombinase-DNA complex with substrate $J$. Let $J$ be a twist knot $C(2,v)$. Then $C\cap J$ has one of possible five forms listed below. For each of these, $B$ has corresponding forms: 
\\If $C\cap J$ has the form
\begin{itemize}
\item $C1$, then $B\cap J$ has the form $B1$
\item $C2$, then $B\cap J$ has the forms $B1, B3$ or $B4$
\item $C3$, then $B\cap J$ has the forms $B2, B3$ or $B4$
\item $C4$, then $B\cap J$ has the form $B1$
\item $C5$, then $B\cap J$ has the forms $B3$ or $B4$.
\end{itemize} 
\end{lemma}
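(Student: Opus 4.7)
My plan is to exploit the spanning surface $D$ guaranteed by Mathematical Assumption 2 to reduce the classification of $C\cap J$ to a finite enumeration of positions of $B$ on $D$. By Claim 1 the intersection $D\cap\partial B$ consists of exactly two co-planar arcs, so these arcs cut $D$ into two pieces, $D\cap B$ and $D\cap C$. Because $D\cap C$ is unknotted relative to $\partial B$, I may isotope $C$ (fixing $\partial B$) so that $D\cap C$ is a standard (possibly smaller) doubly-punctured planar disc with twists. In particular, the clasp and the row of $v$ half-twists, which are the only topologically significant features of the spanning surface of $C(2,v)$, may be slid freely through any flat portion of $D\cap C$.

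The first step is to enumerate, up to this isotopy, where the two arcs $D\cap\partial B$ can sit relative to the clasp, the row of $v$ twists, and the two punctures of $D$. Since every flat region of $D$ is isotopically equivalent, the only combinatorial data are: (a) whether $B$ separates the clasp from the row of $v$ twists or not, (b) whether any twists or the clasp lie inside $B$, and (c) how the two punctures are distributed on either side. A direct (but careful) enumeration of these possibilities yields exactly five inequivalent configurations of $C\cap J$, which I claim match the forms $C1$ through $C5$ in Figure \ref{all the possibilities for substrate}.

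The second step is, for each of these five configurations, to determine which of the allowed pre-recombinant forms $B1, B2, B3, B4$ (from Section \ref{assumtions2}) is compatible. This is a local matching argument: the two arcs $\partial B\cap J$ must connect up through $C\cap J$ so as to recover the twist knot $C(2,v)$. For each of the five forms of $C\cap J$, I check which forms of $B\cap J$ produce $C(2,v)$ upon closure; incompatible forms are ruled out because the resulting closure is not $C(2,v)$ (e.g. the parity of the number of components, or the clasp/twist structure, would be wrong). This yields the stated list of compatible pairs.

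The main obstacle is confirming that the enumeration in the first step is exhaustive. The worry is that although $D\cap C$ is topologically a standard punctured disc with twists, the two arcs $D\cap\partial B$ might sit on $D$ in ways that are not obvious to classify. The resolution is to observe that after the relative isotopy giving $D\cap C$ its standard form, each arc of $D\cap\partial B$ is a properly embedded arc in a disc-with-two-holes-and-twists and is therefore determined up to isotopy by the subset of punctures and clasp/twist features it separates; this turns the classification into a finite combinatorial check.
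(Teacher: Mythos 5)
Your proposal is correct and follows essentially the same route as the paper: both reduce the problem, via the spanning surface of Mathematical Assumption 2 and the fact that $D\cap\partial B$ is two co-planar arcs, to a finite enumeration of where those arcs can sit on the standard doubly-punctured disc with twists, followed by an isotopy rel $\partial B$ to the standard forms $C1$--$C5$. The only cosmetic differences are that the paper organizes the enumeration by which puncture boundaries of the thrice-punctured $S^2$ carry the arcs (together with whether $D\cap B$ is two discs or a band) and reads the pre-recombinant form of $B\cap J$ directly off that case analysis, whereas you organize by separation data and recover the $B$-forms by a closure-compatibility check; both are sound.
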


\begin{proof}
By Assumption 2, we can choose a spanning surface $D$ to be a doubly-punctured planar disc with twists as in Figure \ref{plannarsurface}, such that $D\cap\partial B$ is two co-planar arcs and $D\cap C$ is unknotted rel $\partial B$. 
\\\\Consider the doubly-punctured disc that generates $D$ in Figure \ref{plannarsurface}$(a)$.  We can consider this punctured planar disc with arcs as a thrice-punctured $S^2$ with a collection of arcs connecting the punctures (the three punctures are numbered $1$, $2$ and $3$ in Figure \ref{plannarsurface}$(a)$). A thrice-punctured $S^2$ in $S^3$ with arcs connecting the three punctures can be regarded as  a graph with three points and a collection of arcs connecting them, as illustrated in Figure \ref{S2}.
\\\\We determine all possible conformations of $C\cap J$ in three steps as follows: 
\\\\First we consider all different possible locations of the specific sites $D\cap\partial B$ on the thrice-punctured $S^2$. The two specific
sites can be located either both on the boundary of one puncture of $S^2$ or on a combination of these, so we consider each case. Notice, however, that from the symmetry of the graph described above it is enough to consider only the cases where the sites are located either:
\begin{itemize}
\item \textit{Case (11):} both on the boundary of puncture 1, 
\item \textit{Case (22) or (33):} both on the boundary of  puncture 2 (equivalent to both on puncture 3),
\item \textit{Case (12) or (13):} one site on the boundary of  puncture 1 and the other on the boundary of puncture 2 (equivalent to one site on puncture 1 and the other on puncture 3) and
\item \textit{Case (23):} one site on the boundary of puncture 2 and the other on puncture 3.
\end{itemize}

\noindent Next, on the corresponding spanning surface $D$ (generated by the thrice-punctured $S^2$) we consider all possibilities for $D\cap B$, which can either be two discs or a (possibly twisted) band. 
\\\\Finally, we perform an appropriate isotopy of this spanning surface which maps it to a spanning surface having one of the conformations  of $C\cap J$ illustrated in Figure \ref{all the possibilities for substrate} as boundary. We do this for every case.
\\\\In Figures \ref{cases1}, \ref{cases3} and \ref{cases2} we illustrate the isotopy of $C\cap J$ to one of the standard forms $C1, C2, C3, C4$ or $C5$ (or we show that such a case is not allowed by assumption) for each case. Even though cases (22) and (33) (and (12) and (13)) are equivalent, we illustrate all of them since it may be more straightforward to visualise the isotopy in one case or the other.
\\\\In Figures \ref{cases1}, \ref{cases3} and \ref{cases2}, inside each box we have three sets of illustrations: 
\\\indent \textbf{Left illustration:} thrice punctured $S^2$ with (thin, long) arcs which define the twists on a non-planar surface  and (thick, short) arcs on the boundary of one of the punctures (or a combination of these punctures) defining the arcs $D\cap\partial B$. 
\\\indent \textbf{Middle illustration:} the spanning surface $D$ of our substrate $J=C(2,v)$ with the arcs $D\cap\partial B$ illustrated by a pair of thick, short arcs. 
\\\indent \textbf{Right illustration:} corresponding conformation of $C\cap J$. 
\\\\Since there are many cases and some of whose isotopies are not very complicated, we have illustrated all the cases in Figures \ref{cases1}, \ref{cases3} and \ref{cases2}. Here we describe two of the most involved in detail.
\\\\\textit{Case (11c):} Assume both arcs of $D\cap\partial B$ lie on the boundary of puncture 1 of the thrice-punctured $S^2\subset S^3$. Consider case (11c) as illustrated in Figure \ref{cases1}. The thrice-punctured $S^2$ generates the spanning surface $D$ illustrated. Figure \ref{deformation11c} illustrates a continuous deformation taking this conformation of $D$ to a conformation whose boundary is of the form C1.
\\\\\textit{Case (12a):} In Figure \ref{cases2} consider case (12a). The thrice-punctured $S^2$ generates the spanning surface $D$ illustrated. Figure \ref{deformation12a} illustrates a continuous deformation taking this conformation of $D$ to a conformation whose boundary is of the form C1.
\\\\Here $C\cap D$ is unknotted rel $\partial B$ so the left and middle forms illustrated in Figures \ref{cases1}, \ref{cases3} and \ref{cases2} yield (up to isotopy,
fixing $\partial B$) the corresponding forms of $C\cap J$ illustrated on the right images. Thus, we can also specify the pre-recombinant form of $B \cap J$ for each conformation of $C\cap J$ as shown in Figure \ref{all the possibilities for substrate}. 
\end{proof}

\noindent\textbf{Observations} 

\noindent Since $B\cap J$ contains at most one crossing, the component of $D$ with almost all of the twists of $C(2,s)$ must be contained in $C$. In form $C2$, while there may be twists to the right of $B$, they are
topologically insignificant, since they can be removed by rotating $C\cap D$ by some multiple of $\pi$. In form $C1$, any twists which had occurred above $B$ can be removed and added to the row of twists below $B$ by rotating $C\cap D$ by some multiple of $\pi$. These rotations can occur while pointwise fixing $B$. Thus the five forms of $C\cap J$ illustrated in Figure \ref{all the possibilities for substrate} are the only ones possible.

\section{Characterization of knots and links arising as products of site-specific recombination on a twist knot}\label{theorems}

\noindent In this section we prove Theorems 1 and 2 which determine all the putative DNA knot and link products of (non-distributive) site-specific recombination on a twist knot substrate. We show that these products belong to one of the three families of knots and links illustrated in Figure \ref{alternative family}. (These families of knots and links are defined in section 2.3). We also identify all knots and links that arise as products of distributive site-specific recombination. 
\\\\Here we use our preliminary work from Section \ref{familiesexplained} to prove our main results. In this section, we suppose that the substrate is a twist knot $C(2,v)$ and that all three of our assumptions hold for a particular recombinase-DNA complex. We prove Theorems 1 and 2 which characterize all possible knotted or linked products brought about by a non-distributive reaction with a tyrosine recombinase and a serine recombinase, respectively.
Most knotted and linked products are in the family $F(p, q, r, s, t, u)$. However, there are a series of products of site-specific recombination with a tyrosine recombinase that instead belong to one of $G_1(k)$ or $G_2(k)$ (see the proof of Theorem 1 and Figure \ref{tyrosine products}). In this section we also discuss knots that cannot arise as products of different scenarios of site-specific recombination on twist knots.

\begin{theoremm} Suppose that Assumptions 1, 2 and 3 hold for a particular \textbf{tyrosine} recombinase-DNA complex with substrate $J$. If $J$ is a twist knot $C(2,v)$ then the only possible products (of a non-distributive reaction) are the unknot, the Hopf link, $C(r,s)$ for $r=\{1,2,3,4\}$, $T(2,m)$, a connected sum $T(2,m)\sharp C(2,s)$, a member of the family $F(p,q,r,s,t,u)$ in Figure \ref{family} with $|r|\geq2,|t|=1$ or $2$,$|p|\leq 1$, or a member of the family of knot and links $G1$ or of the family of knots and links $G2$.
\end{theoremm}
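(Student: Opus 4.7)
The plan is to perform an exhaustive case analysis driven by Lemma 1 together with Mathematical Assumption 3 for tyrosine recombinases. The crucial observation is that recombination modifies only the interior of the productive synapse $B$: the complement $C\cap J$ is determined by the pre-recombinant configuration and is invariant under the reaction. Hence the product is completely specified by the pair consisting of $C\cap J$ and the post-recombinant form of $B\cap J$, glued along the four points $\partial B\cap J$. By Lemma 1, $C\cap J$ is isotopic (rel $\partial B$) to one of the five forms $C1,C2,C3,C4,C5$ in Figure \ref{all the possibilities for substrate}; by Mathematical Assumption 3 for tyrosine recombinases together with the discussion in Section \ref{assumtions2}, the post-recombinant $B\cap J$ is one of the eight forms $B1,\ldots,B8$ of Figure \ref{tyrosinesmaths}. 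This yields at most $5\times 8=40$ candidate products, which I would tabulate in Figure \ref{tyrosine products}.

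For each combination, the product is built by reinserting the new $B\cap J$ into the fixed exterior $C\cap J$ and then simplifying via planar isotopy. It is convenient to split the analysis according to the form of the post-recombinant synapse. When $B\cap J$ is $B1$ or $B2$ (no internal crossings), the product is a simple reconnection of the substrate strands and collapses to an unknot, a $T(2,m)$, a $C(r,s)$, or (in the forms $C4, C5$) to a connected sum $T(2,m)\sharp C(2,s)$. When $B\cap J$ is $B3$ or $B4$ (one straight crossing), the three local rows of crossings (the substrate's $v$ vertical twists, the $+2$ clasp, and the one new crossing) assemble into the standard diagram of $F(p,q,r,s,t,u)$. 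When $B\cap J$ is one of the hooks $B5$--$B8$, the hook must be isotoped over or around the clasp; depending on how the hook interacts with the clasp and the twist region, the product is again an $F(p,q,r,s,t,u)$, or collapses to a Hopf link or small $C(r,s)$, or falls into the exceptional families $G_1(k)$ or $G_2(k)$.

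To show that every $F$-family member that arises has the claimed parameter restrictions, I track the provenance of each row of crossings through the gluing. The $r$-row absorbs the $v$ vertical twists of the substrate (possibly augmented by twists transferred from $C\cap D$ under the rotations noted in the Observations following Lemma 1), which forces $|r|\geq 2$. The $t$-row is inherited from the $+2$ clasp of the spanning surface and can lose at most one crossing when a hook merges into it, giving $|t|\in\{1,2\}$. The $p$-row records only the at most one new crossing introduced inside $B$, so $|p|\leq 1$. Degenerate choices of these parameters (zero or $\pm 1$) recover the unknot, the Hopf link, the small clasp knots $C(r,s)$ with $r\in\{1,2,3,4\}$, the torus knots and links $T(2,m)$, and the connected sums $T(2,m)\sharp C(2,s)$ via the subfamily relations described in Section \ref{familiesexplained}.

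The main obstacle will be the bookkeeping for the hooked post-recombinant forms $B5$--$B8$, since a hook's single crossing can be positive or negative and its relative position with respect to the clasp is not determined by the pre-recombinant data. The identification of $G_1(k)$ and $G_2(k)$ as genuinely new product types (and not merely disguised $F$-members) appears precisely when the hook threads the clasp in a way that blocks reduction to a small Montesinos diagram, and it is there that the argument requires the most care, both to realise these products and to confirm that no further knot or link types escape the enumerated list.
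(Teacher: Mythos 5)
Your proposal follows essentially the same route as the paper's proof: invoke Lemma 1 to reduce $C\cap J$ to the five standard forms, invoke Mathematical Assumption 3 for tyrosine recombinases to get the eight post-recombinant forms of $B\cap J$, and enumerate the resulting $5\times 8$ gluings, tabulating the outcomes as in Figure \ref{tyrosine products} (the paper likewise defers the case-by-case simplification to that figure, and likewise isolates the single sequence of products, arising from form $C4$ with the hooked synapse, that lands in $G1$ or $G2$ rather than $F(p,q,r,s,t,u)$). Your additional tracking of where the $r$-, $t$- and $p$-rows originate is a correct gloss on why the stated parameter restrictions hold, but it does not change the underlying argument.
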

\label{theoremtyrosines}

\noindent The possible products are illustrated in Figure \ref{tyrosine products}.

\begin{proof}
By Assumption 3, after recombination with a tyrosine
recombinase $B\cap J$ has one of the eight post-recombinant forms illustrated in Figure \ref{tyrosinesmaths}. By Lemma 1, $C\cap J$ has one of the five forms illustrated in Figure \ref{all the possibilities for substrate}. The products of recombination mediated by a tyrosine recombinase are obtained by replacing the pre-recombinant forms of $B\cap J$ in each of the forms of $C\cap J$ (in Figure \ref{all the possibilities for substrate}) with each of the eight post-recombinant forms of $B\cap J$ (in Figure \ref{tyrosinesmaths}). The resulting products are illustrated in Figure \ref{tyrosine products}.
\\\\More specifically, suppose that $J$ is $C(2,v)$. Then by Lemma 1, $C\cap J$ can have form $C1, C2, C3, C4$ or $C5$. Hence by Figure \ref{tyrosine products}, the possible products are the unknot, $C(r,s)$ for $r=\{1,2,3,4\}$, $T(2,m)$, a Hopf link, a connected sum $T(2,m)\sharp C(2,s)$, a member of the family $F(p,q,r,s,t,u)$ in Figure \ref{family} with $|t|=1$ or $2$,$|p|\leq 1$ and a knot or a link that has a projection in either $G1$ or $G2$. 
\\\\ Note that from Figure \ref{tyrosine products} we can see that all the possible products of site-specific recombination mediated by a tyrosine recombinase on a twist knot substrate belong to one of the subfamilies of $F(p,q,r,s,t,u)$ as illustrated in Figure \ref{subfamilies}, with one exception. For the image on column 7, row 5 of Figure \ref{tyrosine products}, depending on the value of $v$, we get different knots or links:
\\If $v$ is an odd number, then the product is a knot:
\\If $v$ is a negative odd number, the product is a knot belonging to family $G1$ with $k=|v|$.
\\If $v$ is a positive odd number, the product is a knot belonging to family $G2$ with $k=|v|-1$.
\\If $v$ is an even number, the product is a two component link:
\\If $v$ is a negative even number, the product is a link belonging to family $G1$ with $c=|v|$.
\\If $v$ is a positive even number, the product is a link belonging to family $G2$ with $c=|v|-1$.
\\\\Thus, with the exception of one sequence of products, all products of recombination with a tyrosine recombinase belong to the family of small Montesinos knots and links illustrated in Figure \ref{family}.
\end{proof}

\noindent  It follows from Theorem 1 that every product of recombination with tyrosine recombinases is a member of the family in Figure \ref{family} with $|t|=1,2$ and $|p|\leq1$, or a member of the families $G1$ and $G2$. Also, it follows from Figure \ref{subfamilies} that $C(2,s)$ (possibly with an additional trivial component) $T(2,m)$ and $T(2,m)\sharp C(2,s)$ can be obtained as knots or links in $F(p,q,r,s,t,u)$ with $|t|=1,2$ and $|p|\leq1$. 

\begin{theoremm} Suppose that Assumptions 1, 2 and 3 hold for a particular \textbf{serine} recombinase-DNA complex with substrate $J$. If $J$ is $C(2,v)$ then the only possible products (of a non-distributive reaction) are the $C(r,s)$, $T(2,m)$, a connected sum $T(2,m)\sharp C(2,s)$ and any member of the family in Figure \ref{family} with $|r|\geq2,t\neq0$ and $|p|\leq 1$.
\end{theoremm}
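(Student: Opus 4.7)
My plan is to mirror the structure of the proof of Theorem 1, simply replacing the tyrosine post-recombinant forms in Figure \ref{tyrosinesmaths} with the serine post-recombinant forms in Figure \ref{serinesmaths}. By Lemma 1, since $J=C(2,v)$, the complement $C\cap J$ must be ambient isotopic (pointwise fixing $\partial B$) to one of the five standard forms $C1,C2,C3,C4,C5$ in Figure \ref{all the possibilities for substrate}, and for each such form Lemma 1 also records which of the pre-recombinant conformations $B1,\dots,B4$ are compatible with it. This is the only ingredient about the complement that is needed, and it is the same ingredient that was used in Theorem 1.

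The second ingredient is the shape of $B\cap J$ after recombination. By Mathematical Assumption 3 for serine recombinases, after $n$ rounds of processive recombination $B\cap J$ is ambient isotopic to one of the two forms $n1$ or $n2$ in Figure \ref{serinesmaths}, each consisting of a single row of $n$ identical (positive, negative, or zero) crossings differing only by a $90^{\circ}$ rotation; here $n$ ranges over $\mathbb{Z}$. I would then construct a table analogous to Figure \ref{tyrosine products} by inserting each of $n1,n2$ into each compatible slot in each of $C1,\dots,C5$ allowed by Lemma 1. Each resulting diagram is then simplified by an isotopy in $C$ that commutes the row of $n$ synapse crossings with any adjacent row of twists or clasp crossings, and the isotopy type of the product is read off directly.

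The final step is to identify each entry of the resulting table with a member of one of the families stated in the theorem. Entries coming from $C1,C4,C5$ collapse to clasp knots $C(r,s)$, torus knots/links $T(2,m)$, or the connected sums $T(2,m)\sharp C(2,s)$, because the synapse in these complements sits either parallel to the clasp or parallel to the long row of twists and merges into it. Entries coming from $C2,C3$ produce a third row of crossings separated from the two rows of $C(2,v)$ by the synapse, and so land in $F(p,q,r,s,t,u)$ with the parameter assignments $t=n\neq 0$ (forced by Assumption 3 for serines), $|r|\geq 2$ (coming from the $v$ row of $C(2,v)$, which survives intact in $C2,C3$), and $|p|\leq 1$ (the clasp of $C(2,v)$, which in these complements contributes at most one crossing in the $p$ position after commuting the synapse). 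Crucially, because the serine post-recombinant forms $n1,n2$ are never hooks, the two anomalous entries that gave rise to the $G_1,G_2$ families in Theorem 1 do not appear here.

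The main obstacle is bookkeeping: verifying the precise parameter ranges $|r|\geq 2$, $t\neq 0$, $|p|\leq 1$ requires tracking exactly how the $v$-row, the clasp, and the $n$-row of $B\cap J$ line up after the reconnection in each of the ten or so $(C_i, n_k)$ cases, and then checking that no isotopy of the diagram can push a resulting product outside the stated range. Once the analogue of Figure \ref{tyrosine products} is drawn and the subfamily classification of Figure \ref{subfamilies} is invoked, the identification of each product with the claimed family is mechanical.
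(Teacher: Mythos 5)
Your proposal follows essentially the same route as the paper: invoke Lemma 1 for the five forms of $C\cap J$, invoke Mathematical Assumption 3 for serine recombinases for the post-recombinant forms $n1,n2$, substitute case by case to build the analogue of Figure \ref{tyrosine products} (this is the paper's Figure \ref{serine products}), and read off membership in the subfamilies of Figure \ref{subfamilies}; your observation that the non-hook forms $n1,n2$ exclude the $G1,G2$ products is also the correct reason those families drop out. The only discrepancy is in which complement forms yield which products --- the paper has $C1$ producing the genuinely three-tangle Montesinos members (subfamily 3) and $C2$ producing the connected sums and clasp knots (subfamilies 5 and 7), roughly the reverse of your assignment --- but this is a figure-labelling issue rather than a flaw in the argument's structure.
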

\label{theoremserines}

\noindent The possible products are illustrated in Figure \ref{serine products}.

\begin{proof}
By Assumption 3, after recombination with a serine
recombinase, $B\cap J$ has one of the two post-recombinant forms $n1$ and $n2$ illustrated in Figure \ref{serinesmaths}. Also, by Lemma 1, $C\cap J$ has one of the five forms illustrated in Figure \ref{all the possibilities for substrate}. For Assumption 3 for serine recombinases, for each of the forms of $C\cap J$, the products of recombination with serine recombinases are obtained by replacing each of the pre-recombinant forms of $B\cap J$ with their corresponding post-recombinant form of $B\cap J$ after $n$ rounds of processive recombination according to  Figure \ref{serinesmaths}. The resulting products are illustrated in Figure \ref{serine products}.
\\\\More specifically, suppose that $J$ is $C(2,v)$. Then according to Lemma 1, $C\cap J$ can have forms $C1, C2, C3, C4$ or $C5$. When $C\cap J$ has form $C1$, then $B\cap J$ must have form $B1$. It follows from Figure \ref{serinesmaths} that the post-recombinant form of $B\cap J$ must be of form $n2$. Thus, by replacing $B\cap J$ with $B1$ in $C1$, we obtain that the products can be any knot or link in subfamily 3 illustrated in Figure \ref{subfamilies}. When $C\cap J$ has form $C2$, then $B\cap J$ must have form $B2, B3$ or $B4$. In this case by Figure \ref{serinesmaths}, the post-recombinant form of $B\cap J$ must be of form $n1$ or $n2$. We see from form $C2$ in Figure \ref{all the possibilities for substrate} that the products can be any knots or links in subfamily 5 or subfamily 7 illustrated in Figure \ref{subfamilies}. A similar analysis is made on the other possible forms of $C\cap J$ to arrive to the conclusion that the products can be any knots or links in subfamilies $1,3,5,7$ or $8$ illustrated in Figure \ref{subfamilies} and thus, are members of the family $F(p,q,r,s,t,u)$ (See Figure \ref{serine products}).
\end{proof}

\noindent Table 1 summarizes the results of Theorems 1 and 2.
\\\\\textbf{Note:} Theorems 1 and 2 distinguish between the chirality of the product DNA molecules, since using our model we can work out the 
\underline{exact conformation of all possible} products of site-specific recombination starting with a particular twist knot susbtrate and site-specific recombinase. For example, starting with the twist knot substrate $C(2,-1)$ (a right-handed (or (+)) trefoil), according to our model, site-specific recombination mediated by a tyrosine recombinase yields $T(2,5)$, which is a (+) $5_1$ (among other products) and can never yield T(2,-5), which is a (-) $5_1$. For an explicit strategy see our paper \cite{KDbio}.

\subsection{Knots and links that cannot arise as products}

There are a number of simple knots and links that cannot arise as products of non-distributive site-specific recombination. 

\begin{corol} Suppose that Assumptions 1, 2 and 3 hold for a particular site-specific  recombinase-DNA complex with substrate a twist knot $C(2,v)$. Any product arising that falls outside of families $F(p,q,r,s,t,u)$, $G1$ or $G2$ must arise from distributive recombination.
\end{corol}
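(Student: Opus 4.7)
The plan is to prove the corollary as an essentially immediate consequence of Theorems 1 and 2, using the dichotomy that every site-specific recombinase belongs to either the tyrosine or the serine subfamily (as stated in Section \ref{biodefs}). I would argue by contrapositive: starting from a product of non-distributive recombination, I would show that it must lie in $F(p,q,r,s,t,u) \cup G_1 \cup G_2$, so that any product outside this union cannot arise non-distributively and hence must come from distributive recombination.

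Concretely, I would fix a recombinase-DNA complex satisfying Assumptions 1, 2, and 3 with substrate $J = C(2,v)$, and a product $K$ of a single (possibly processive, but not distributive) recombination event. Since the recombinase mediating the reaction is either a tyrosine or a serine recombinase, exactly one of the two mathematical statements in Assumption 3 applies. In the tyrosine case, Theorem 1 directly lists all possible products and confirms that each belongs to $F(p,q,r,s,t,u)$ (including the subfamilies $C(r,s)$, $T(2,m)$, $T(2,m)\sharp C(2,s)$, and the Hopf link/unknot as degenerate members), to $G_1$, or to $G_2$. In the serine case, Theorem 2 shows that every product is of the form $C(r,s)$, $T(2,m)$, $T(2,m)\sharp C(2,s)$, or more generally a member of $F(p,q,r,s,t,u)$. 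Combining the two cases, the set of all non-distributive products is contained in $F(p,q,r,s,t,u) \cup G_1 \cup G_2$.

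Taking the contrapositive gives the statement of the corollary: if a product $K$ of site-specific recombination on $C(2,v)$ is not in $F(p,q,r,s,t,u) \cup G_1 \cup G_2$, then neither Theorem 1 nor Theorem 2 can account for it under the non-distributive hypothesis, so the recombination must have involved releasing and rebinding the DNA, i.e., it must have been distributive. Since distributive recombination is permitted for both subfamilies of recombinases (see Section \ref{biodefs}), this is the only remaining possibility consistent with the three assumptions.

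There is no real obstacle here: the corollary is a bookkeeping consequence of the two theorems together with the fact that the tyrosine/serine dichotomy exhausts all recombinase subfamilies. The only point worth stating carefully is that in a distributive process the three assumptions are imposed on each individual round rather than on the overall reaction, so Theorems 1 and 2 apply to each single round but not to the composite; this is precisely why distributive recombination can produce knots and links outside $F(p,q,r,s,t,u) \cup G_1 \cup G_2$, by iterating several non-distributive rounds whose intermediate products are themselves in the allowed families but whose concatenation need not be.
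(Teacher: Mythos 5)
Your proposal is correct and matches the paper's (implicit) reasoning: the paper states this corollary without a separate proof, treating it exactly as you do — as the contrapositive of Theorems 1 and 2 combined with the fact that every recombinase is either a tyrosine or a serine recombinase. Your closing remark about why the theorems fail to constrain the composite of several distributive rounds is a helpful clarification, but the argument is the same bookkeeping consequence the paper intends.
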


\noindent For example, the knot $8_{18}$ is a knot that is not Montesinos, thus it does not belong to our family of small Montesinos knots and links. It also does not belong to either $G1$ or $G2$, so $8_{18}$ is an example of a knot that cannot arise as a product of non-distributive recombination on a twist knot substrate.
\\\\\textit{Knots and links in $F(p,q,r,s,t,u)$ that cannot arise from recombination mediated neither by a serine recombinase, nor a tyrosine recombinase}. The knot $10_{141}$ cannot be expressed in $F(p,q,r,s,t,u)$ with $t\neq0$ and $|p|\leq1$. Recall that all products from recombination with a tyrosine recombinase or a serine recombinase belonging to $F(p,q,r,s,t)$ can be expressed with $r>2, t\neq0$ and $|p|\leq1$. Thus $10_{141}$ cannot arise as a product.
\\\\\textit{Knots that cannot arise from recombination mediated by a tyrosine recombinase}.
There are knots and links in $F(p,q,r,s,t,u)$ which do not have a projection with $|t|=\{\pm1,\pm2\}$ and $|p|\leq1$, for example, the knot $8_{11}=F(2, 2, 2,-1,-3, 0)$. By inspection we can  see that there is no way to express $8_{11}$ as a member of $F(p,q,r,s,t,u)$ with $t=\{\pm1, \pm2\}$ and $|p|\leq1$, hence $8_{11}$ is not a product of recombination with a tyrosine recombinase. The knot $10_{64}$ is another example of this.
\\\\\textit{Knots that can arise as products of recombination mediated by a serine recombinase, but not by a tyrosine recombinase}. In contrast with Theorem 1, any knot or link in the family illustrated in Figure \ref{family} with $t\neq 0$ (not just $t=\{\pm1, \pm2\}$) and $|p|\leq 1$, can occur as a consequence of Theorem 2. The knot $8_{11}$ mentioned above is an example of this; this knot is a possible product of recombination with a serine recombinase, but not with a tyrosine recombinase.

\section{Minimal crossing number of our model}\label{MCN}

\noindent In this section we prove Theorem 4 which shows that all the possible DNA knot and link products of site-specific recombination on a twist knot substrate are a very small fraction of all knots and links. We also further restrict the knot and link types of products that have minimal crossing number one more than that of the substrate.
\subsection{The growth of product knots and links is proportional to  $n^5$.}\label{MCN1}
To prove the main theorem of this section, Theorem 4, we will split the family $F(p,q,r,s,t,u)$ of knots and links into seven smaller subfamilies illustrated in Figure \ref{counting}. Theorem 4 is independent of how family $F(p,q,r,s,t,u)$ is split up since we are using these subfamilies to count all the possible knots and links belonging to $F(p,q,r,s,t,u)$.
\begin{definition} For a knot or link $K$ the \textit{minimal crossing number} MCN$(K)$ is the smallest number of crossings over all possible projections. For a knot or link $K$, denote its minimal crossing number by MCN$(K)$.
\end{definition}

\noindent The number of prime knots and links (links with up to two components and counting chiral pairs separately) with minimal crossing number $n$ grows exponentially as a function of $n$  \cite{CS}. By contrast, we now prove that the total number of knots and links with MCN$(K)=n$ that are putative products of site-specific recombination on a twist knot substrate grows linearly as a function of $n^5$. Our families include prime and composite knots and links with up to three components. For the purposes of this section, we do not distinguish handedness of chiral knots, however, even including both versions of chiral knots, still our family grows slower than the function of $n^5$ multiplied by 2. This actually means that all the possible prime knot and (two-component) link products of site-specific recombination on a twist knot substrate are a very small fraction of all knots and links.
\\\\First, we consider knots and links belonging to $F(p,q,r,t,s,u)$. Note that, while the knots and links in this family  have at most six non-adjacent rows containing $p,q,r,s,t$ and $u$ signed crossings respectively, it does not follow that the minimal crossing number of such a knot or
link is $|p|+|q|+|r|+|s|+|t|+|u|$. If the knot or link is not alternating, it is quite possible that the number of crossings can be significantly reduced. Thus, \textit{a priori}, there is no reason to believe that the number of knots and links in this product family should grow linearly with $n^5$.

\begin{definition} A link diagram is called \textit{reduced} if it does not contain any `removable' or `nugatory' crossings. A \textit{reduced alternating} link diagram is a link diagram that is reduced and also alternating.
\end{definition}

\noindent Murasugi \cite{Murasugi1} and Thistlethwaite \cite{Thist} proved
that any reduced alternating diagram has a minimal number of crossings.
Buck and Flapan \cite{BFmaths} used this to show that for a twist knot $C(r,s)$ if $r$ and $s$ have the same sign, then MCN($C(r, s)) = |r|+|s|-1$,
and if $r$ and $s$ have opposite sign then MCN($C(r, s))= |r| + |s|$. 
\\\\To prove our result, we consider a \textit{Hara-Yamamoto projection}: a projection of a knot or a link in which there is a row of at least two crossings and which has the property that if this row is cut off from the rest of the projection and the endpoints are resealed in the two natural ways, then both resulting projections are reduced alternating (see Figure \ref{HY}). Hara and Yamamoto showed that any Hara-Yamamoto projection has a minimum number of crossings \cite{Hara-Yamamoto}. 
\\\\We make use of the following theorem proved by Lickorish and Thistlethwaite, in \cite{LickThist}:

\begin{theorem}\textbf{(Lickorish-Thistlethwaite)} 
If a link $L$ admits an $n$-crossings projection of the form as in Figure \ref{montesinoss}(a) with $k=0$ and each $R_i$ a reduced alternating rational tangle diagram with one crossing between the two arcs at the bottom of each $R_i$ and at least one more crossing. Then $L$ cannot be projected with fewer than $n$ crossings.
\end{theorem}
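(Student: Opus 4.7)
The plan is to apply the Kauffman bracket polynomial $\langle D\rangle$ together with the theory of plus- and minus-adequate diagrams. For any link diagram $D'$ with $c(D')$ crossings one has $\text{span}\langle D'\rangle \leq 4\,c(D')$, so it suffices to exhibit, for the given Montesinos projection $D$ in Figure \ref{montesinoss}(a), the equality $\text{span}\langle D\rangle = 4n$. This will force $c(D') \geq n$ for every diagram $D'$ of $L$, which is exactly the conclusion of the theorem.

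I would carry this out by verifying that $D$ is both plus- and minus-adequate, meaning that no state circle in the all-$A$ (respectively all-$B$) smoothing of $D$ passes through any single crossing twice; when both conditions hold, the standard degree formulas for $\langle D\rangle$ of an adequate diagram yield $\text{span}\langle D\rangle = 4n$ at once. The first step is the local version: each reduced alternating rational tangle $R_i$ is both plus- and minus-adequate as a tangle, which can be proved by induction on the length of its continued-fraction expansion, with the inductive step handling the addition of either a horizontal twist or a vertical twist to an already adequate tangle. The second, more delicate, step is to show that adequacy is preserved under the Montesinos gluing of the $R_i$ with $k=0$ half twists on top. To do this I would label the four strands of each $R_i$ together with the arcs produced in each all-smoothing, and then track how the local state circles of $R_i$ and $R_{i+1}$ concatenate to produce the global state circles of $D$.

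The hypotheses that each $R_i$ has a distinguished crossing between its two bottom arcs and at least one further crossing are precisely what is needed to preserve adequacy through the gluing. They ensure that a state circle entering $R_i$ from the junction with $R_{i-1}$, traversing $R_i$, and exiting to $R_{i+1}$ cannot re-enter $R_i$ in a way that forces it to pass through the same crossing of $R_i$ twice: the distinguished bottom crossing separates the top from the bottom of each tangle at the level of the smoothing, and the extra crossing inside $R_i$ always pinches off a distinct short-circuit state circle rather than allowing a long re-entrant one. Once global plus- and minus-adequacy of $D$ are established, the identity $\text{span}\langle D\rangle = 4n$ follows from the standard Kauffman-bracket degree computation, and the minimality of $n$ follows from $\text{span}\langle D\rangle \leq 4\,c(D')$.

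The principal obstacle, as anticipated, is the junction analysis. A priori nothing prevents a state circle from running through the bottom of $R_i$, crossing over to $R_{i+1}$, and returning to strike a crossing of $R_i$ a second time, which would destroy adequacy. I would handle this by a short case analysis on the possible pairings of the four bottom arcs of $R_i$ in each of the two smoothings, invoking in each case the presence of the extra crossing inside $R_i$ to force the suspected offending loop to separate into two disjoint state circles. If this local-to-global adequacy principle can be established cleanly, the remainder of the proof is a routine bracket-polynomial calculation.
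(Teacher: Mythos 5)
First, a point of reference: the paper gives no proof of this statement at all --- it is quoted as an external theorem of Lickorish and Thistlethwaite \cite{LickThist} --- so the only meaningful comparison is with the original argument, and against that your sketch has a genuine, non-repairable gap at its central step.

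The gap is the claim that adequacy of the Montesinos diagram $D$ forces $\mathrm{span}\langle D\rangle = 4n$. The standard degree computation for an adequate connected $n$-crossing diagram gives $\mathrm{span}\langle D\rangle = 2n + 2\left(|s_A D| + |s_B D|\right) - 4$, where $s_A D$ and $s_B D$ are the all-$A$ and all-$B$ states; this equals $4n$ only when $|s_A D| + |s_B D| = n+2$. The dual-state inequality $|s_A D| + |s_B D| \leq n+2$ is an equality precisely for alternating connected diagrams (equivalently, when the Turaev surface is a sphere); for a connected non-alternating diagram one in fact has $|s_A D| + |s_B D| \leq n$, hence $\mathrm{span}\langle D\rangle \leq 4n-4$. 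The Montesinos diagrams in the theorem are in general \emph{not} alternating: each $R_i$ is alternating internally, but adjacent tangles whose fractions have opposite signs fail to alternate across the junctions --- and this mixed-sign case is exactly the one the paper needs, since its tables list ``reduced Montesinos'' as a category distinct from ``reduced alternating.'' Consequently your chain of inequalities does not close: combining $\mathrm{span}\langle D\rangle = 2n + 2(|s_A D|+|s_B D|)-4$ with the bound $\mathrm{span}\langle D'\rangle \leq 4\,c(D')$ for a competing diagram $D'$ only yields $c(D') \geq \tfrac{1}{2}\left(n + |s_A D| + |s_B D| - 2\right)$, which is at most $n-1$ in the non-alternating case. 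In other words, the alternating case of the statement is already the Murasugi--Thistlethwaite--Kauffman theorem, and the entire content of the Lickorish--Thistlethwaite result lies in the non-alternating case, where the pure bracket-span argument you propose cannot succeed. Your local-to-global adequacy analysis of the junctions is a sensible ingredient (adequacy of such glued diagrams is indeed part of their paper), but it must be supplemented by a finer analysis than the span of the bracket --- one exploiting the tangle decomposition directly, and, for general adequate diagrams, running through the two-variable Kauffman polynomial rather than the bracket.
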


\noindent We refer to such a projection as a \textit{reduced Montesinos diagram}. We can deduce from the theorem that any projection of a knot or link that is a reduced Montesinos diagram has a minimal number of crossings. \\\\We begin with two lemmas.

\begin{lemma} The number of distinct knots and links in the product family illustrated in Figure \ref{family} with MCN= $n$ grows linearly with $n^5$.
\end{lemma}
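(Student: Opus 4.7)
The plan is to treat $F(p,q,r,s,t,u)$ subfamily-by-subfamily using the seven-piece decomposition of Figure \ref{counting}, and in each piece to exhibit an explicit formula for the minimal crossing number as a piecewise-linear function of the six parameters. Counting integer tuples satisfying such a linear constraint then immediately produces the $n^5$ growth, modulo the issue of distinct tuples giving isotopic knots.

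For each subfamily I would first identify the allowed ranges of the parameters (zero, $\pm 1$, or absolute value at least two) and verify that the diagram of Figure \ref{family} is either a reduced Montesinos diagram or a Hara-Yamamoto projection, possibly after removing a few nugatory crossings or coalescing adjacent rows. In the Montesinos case the Lickorish-Thistlethwaite theorem quoted above gives MCN equal to the crossing count of the diagram; in the Hara-Yamamoto case the same conclusion follows from the Hara-Yamamoto theorem. For subfamilies that degenerate to pure clasp, twist, or torus knots and links I would instead invoke the Buck-Flapan formula MCN$(C(r,s)) = |r|+|s|-\delta$ (with $\delta \in \{0,1\}$ depending on whether $r,s$ share a sign) together with the standard MCN$(T(2,m)) = |m|$. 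In every case the resulting MCN formula equals $|p|+|q|+|r|+|s|+|t|+|u|$ up to a small constant correction depending on the sign pattern.

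For fixed $n$, the number of integer sextuples $(p,q,r,s,t,u)$ satisfying such a linear equation within a given sign-pattern sector is $\Theta(n^5)$: five of the six signed absolute values can range freely over $\{0,1,\ldots,n\}$ while the sixth is determined up to a bounded additive constant. Summing over the finitely many sign-pattern sectors and the seven subfamilies preserves this rate, yielding an upper bound of $O(n^5)$ on the number of parameter tuples with MCN equal to $n$.

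The main obstacle is the step from parameter tuples to distinct knots and links, since different tuples may yield isotopic products: the numerator closure of a three-tangle Montesinos diagram is invariant under cyclic permutation of the three rational tangles, simultaneous sign change, and the usual continued-fraction ambiguities in $\tfrac{p}{pq+1}$. To handle this I would invoke Bonahon's classification theorem for three-tangle Montesinos links, which determines the isotopy class from an unordered triple of slopes mod $1$ together with an integer Euler number; the resulting symmetry group has bounded order, so collapsing its orbits only changes the count by a bounded multiplicative factor. The lower bound $\Omega(n^5)$ is then obtained by restricting to the subfamily with $|p|,|q|,|r|,|s|,|t|,|u| \geq 2$, where the canonical form is essentially unique within each symmetry orbit. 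Combining the matching upper and lower bounds yields the lemma.
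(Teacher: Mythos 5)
Your upper-bound argument is essentially the one in the paper: the same seven-subfamily decomposition of Figure~\ref{counting}, the same toolkit (Murasugi--Thistlethwaite for reduced alternating diagrams, Lickorish--Thistlethwaite for reduced Montesinos diagrams, Hara--Yamamoto for the residual case, and the clasp/torus formulas for the degenerate subfamilies), and the same final step of counting integer sextuples satisfying a linear MCN constraint, giving $O(n^5)$ per sign-pattern sector. Be aware, though, that the bulk of the paper's proof is precisely the verification you defer: an exhaustive $121$-case analysis (Tables 2--5) showing that each diagram, after an explicitly described number of strand movements (not merely ``removing nugatory crossings or coalescing rows''---the key move trades $|r|+|q|$ non-alternating crossings for $(|r|-1)+(|q|-1)+1$ alternating ones), lands in one of those minimal-crossing classes with an explicit MCN formula; many of those cases are reduced alternating rather than reduced Montesinos, a class your outline does not list explicitly. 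The two genuine divergences are in how distinctness is handled and in the lower bound. The paper sidesteps Bonahon entirely: since it only asserts an upper bound ($444n^5$), overcounting isotopic products arising from different parameter tuples is harmless, and the paper says so. Correspondingly, the paper never proves your $\Omega(n^5)$ lower bound---``grows linearly with $n^5$'' is used there in the one-sided sense of being bounded above by a constant times $n^5$, which is all the subsequent application (products form an exponentially small fraction of all knots and links) requires. Your Bonahon--Siebenmann argument would upgrade the lemma to a genuine $\Theta(n^5)$ statement and is plausible (the fractions $\tfrac{t}{tu+1}$ are in lowest terms, so distinct pairs give distinct slopes and the symmetry group collapsing tuples is of bounded order), but it is additional work not present in, and not needed for, the paper's proof.
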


\begin{proof}
Fix $n$ and suppose $K$ is a knot or a link projection in the family of Figure \ref{family} with minimal crossing number $n$. Then this projection has $|p|+|q|+|r|+|s|+|t|+|u|$ crossings. We divide the proof into three cases:
\begin{case} \textit{$K$ reduced alternating or reduced Montesinos:} If projection $K$ is reduced alternating or a reduced Montesinos diagram, then $|p|+|q|+|r|+|s|+|t|+|u|=n$.
\end{case}

\noindent We now show that if $K$ is not reduced alternating or a reduced Montesinos diagram then it is ambient isotopic to one of 121 possible projections which have minimal number of crossings.

\begin{case} \textit{$K$ can be isotoped to a reduced alternating or reduced Montesinos diagram:} Figure \ref{strandmovement} illustrates an example of how to reduce the number of crossings in a projection $K$ that is not reduced alternating or reduced Montesinos. Observe that for the link in Figure \ref{strandmovement}, the part containing the rows of $r$ and $q$ crossings is alternating if and only if $r$ and $q$ have opposite signs. Similarly for the section containing the rows of $r$ and $u$ crossings. If $r$ and $q$ have the same sign, then by moving a single strand, this part of the knot or link becomes
alternating. This isotopy removes a crossing from both the $r$ row and the $q$ row and adds a single new crossing. Thus we reduce this part of the diagram from having $|r|+|q|$ crossings in a non-alternating form to having $(|r|-1)+(|q|-1)+1$ crossings in an alternating form. Similarly, for the middle and left hand side of the diagram, a non-alternating diagram having $|r|+|u|$ crossings is reduced to  an alternating for having $(|r|-1)+(|u|-1)+1$ crossings. So overall, our original non-alternating diagram having $|r|+|q|+|u|$ crossings is reduced by an isotopy of two strand movements to a reduced alternating diagram having $(|r|-1)+(|q|-1)+(|u|-1)+2=n$ crossings. Note that we can also change non-alternating diagrams to reduced Montesinos diagrams using strand movements like these. 
\end{case}

\begin{case} \textit{$K$ cannot be isotoped to either a reduced alternating or reduced Montesinos diagram:} There are also cases where we cannot obtain a reduced alternating or reduced Montesinos diagram via strand movements of $K$. We describe a specific example illustrated in Figure \ref{HY}.  Let $K$ be a knot or link diagram in our family $F(p,q,r,s,t,u)$ with $t,p=1,r>1,s=1,q,u<-1$. In its original form, the projection has $(r-1)+(|u|+1)+(|q|+1)$ crossings. The projection on the left of Figure \ref{HY} is Hara-Yamamoto because the projections (on the right) obtained by resealing the endpoints are both reduced alternating. Thus, this projection has a minimum number of crossings. 
\end{case}

\noindent We consider 121  cases according to the values of $p,q,r,s,t$ and $u$, and show that in all but the Hara-Yamamoto case $K$, the initial diagram is isotopic to a diagram that is either reduced alternating or reduced Montesinos and hence has minimal crossing number. Since there are so many cases, we display the results in Tables 2 to 5 rather than discussing each case individually. We make the following notes and observations with respect to the Tables.

\begin{itemize}
\item  To compute Tables 2, 3, 4 and 5, the family of knots and links $F(p,q,r,s,t,u)$ is broken down into seven smaller subfamilies, shown in Figure \ref{counting}. We count knots and links belonging to subfamilies denoted by  $F_{S_1}(0,q,r,s,t,u)$ with $|r|>0, |t|>1$, $F_{S_2}(\pm1,q,r,s,\pm1,u)$ with $|r|>1$, $F_{S_3}(\pm1,q,r,s,t,u)$ with $|r|, |t|>1$, $F_{S_4}(p,q,r,s,t,u)$ with $|t|,|r|,|p|>1$, $T(2,r)$, $K(q,s,u)$ and the $unlink$ (subfamilies illustrated with a double arrow in between indicate that they give the same knots and links, so we count only one of them). Observe that in subfamily $F_{S_2}(\pm1,q,r,s,\pm1,u)$, the rows of crossings containing $u$ and $q$ crossings are interchangeable, so we treat the variables $u$ and $q$ as interchangeable. Similarly, in subfamily $F_{S_3}(\pm1,q,r,s,t,u)$, the tangles $R_1$ and $R_2$ are interchangeable, so we treat the variables $r$ and $t$ as interchangeable and $s$ and $u$ as interchangeable. A similar consideration is given to subfamily $F_{S_1}(0,q,r,s,t,u)$ and $F_{S_4}(p,q,r,s,t,u)$. For certain specific values of $p,q,r,s,t$ and $u$, we may obtain a trivial knot or link. However, we do not specifically exclude these cases from our Tables.

\item \textit{For all tables:} \textit{column two} lists the form of the knot or link which has a minimal number of crossings (e.g. reduced alternating). If the knot or link is isotopic to a clasp, pretzel, or torus knot or link or a composition of any of these, we list the specific form. Also, if one of the knots or links contains a trivial component, we use the shorthand $+O$ to indicate this.  \textit{Column three} shows the number of strand movements needed to achieve a diagram with minimal number of crossings. We write an expression with ($\pm\sharp$?) at the end to indicate that there may be $\sharp$ more or less number of strand movements, depending on the values of the relevant variables. \textit{ Column four} shows the MCN of the corresponding reduced alternating or reduced Montesinos conformation. The MCN is listed as an unsimplified function of $p, q, r, s, t$ and $u$ to help the reader recreate the isotopy taking the original form to the minimal crossing form. As a consequence, in column four we write an expression with ($\pm\sharp$?) at the end to indicate that the MCN may be $\sharp$ smaller or bigger. For example, when the minimal crossing form of the knot is a clasp knot $C(r, s)$, if we do not know the signs of $r$ and $s$, on column two, we write an expression with $+1?$ and in column three we write an expression with $-1?$, see Figure \ref{strandmovement}. In \textit{column five} we obtain the upper bounds for the number of links in each case by expressing MCN$=n$ as a sum of nonnegative integers. This enables us to find an upper bound for the number of knots and links with MCN$=n$ in each case. Note that the upper bounds given are intended to be simple rather than as small as possible. In particular, a number of our cases overlap, and thus some knots and links are counted more than once.
\item \textit{For all tables:} We consider a knot or link and its mirror image to be of the same link type, and hence we do not count both. Thus without loss of generality, we assume that $r\geq0$. 
\end{itemize}

\noindent There are 118 nontrivial cases in Tables 2, 3, 4 and 5. Any knots and links appearing more than once in the tables are counted only once. Thus there are at most 111 distinct families of knots and links listed in the tables. The number of knots and link in each of these families is bounded above by $4n^5$ (in fact, for most of the cases there are significantly fewer than $4n^5$ knot and link types). It follows that for a given $n$, the number of distinct knots and links in the product family $F(p,q,r,s,t,u)$ which have MCN$=n$ is bounded above by $4n^5\times 111=444n^5$. In particular, the number of distinct knots and links with the form of Figure \ref{family} which have MCN$=n$ grows linearly with $n^5$.
\end{proof}

\noindent We now consider product knots and links belonging to $G1$ or $G2$. These come about as products of recombination with a tyrosine recombinase on a recombinase-DNA complex with conformation $C4$ illustrated in Figure \ref{all the possibilities for substrate} and the post-recombinant conformation $B=B6$. 

\begin{lemma} For a fixed $n$ there exists at most one knot type in $G1$ with MCN equal to $n$. Similarly, for $G2$.
\end{lemma}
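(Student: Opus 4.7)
The plan is to show that the standard diagram of $G_i(k)$ exhibited in Figure \ref{famsG1} (respectively Figure \ref{famsG2}) already realises the minimal crossing number, and that this minimal crossing number is a strictly monotone function of $|k|$. Once this is established, distinct absolute values of $k$ give distinct values of MCN, so after identifying chiral pairs at most one knot type of $G_i$ has MCN equal to any prescribed $n$.

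First, I would fix the standard diagram of $G_i(k)$ from the relevant figure and count its crossings. The diagram has a fixed skeleton (arising from conformation $C4$ together with the post-recombinant form $B6$) contributing some constant number $c_i$ of crossings, plus the row of $|k|$ crossings whose sign is determined by $k$. This gives a diagram with exactly $c_i+|k|$ crossings.

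The key step is to verify that this diagram has a minimal number of crossings. I would argue this in one of two ways depending on what Figures \ref{famsG1}, \ref{famsG2} show: (i) if the standard diagram is reduced alternating, apply the Murasugi--Thistlethwaite theorem cited in Section \ref{MCN1} to conclude MCN$(G_i(k))=c_i+|k|$; or (ii) if the diagram contains an alternating-row-plus-external-pattern structure like the ones in the preceding lemma, cut off the row of $|k|$ crossings and verify that both ways of resealing the endpoints yield reduced alternating diagrams, so the diagram is Hara--Yamamoto and by Hara--Yamamoto's theorem has MCN equal to $c_i+|k|$. In the event that neither condition holds on the nose but a bounded number of strand movements (as in the proof of Lemma 2) brings the diagram into reduced alternating form, a parallel computation still gives MCN as a strictly increasing linear function of $|k|$ with leading coefficient $1$.

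With MCN$(G_i(k))=c_i+|k|$ (or any strictly monotone function of $|k|$), the map $|k|\mapsto$MCN$(G_i(k))$ is injective from $\{|k|:k\in\mathbb{Z}\}$ into $\mathbb{N}$. Since, as noted at the start of Section \ref{MCN}, chiral pairs are identified, the pair $G_i(k)$ and $G_i(-k)$ is counted as a single knot type, so each MCN value is attained by at most one knot in $G_i$, as claimed. The argument for $G_2$ is identical with the appropriate constant $c_2$ in place of $c_1$.

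The hard part will be verifying the reduced-alternating or Hara--Yamamoto property directly from the figure for $G_i(k)$; this is the only place where the specific geometry of the family enters, and any nugatory crossings or non-alternating rows in the fixed skeleton would have to be removed by a fixed isotopy before applying Murasugi--Thistlethwaite or Hara--Yamamoto. Once the diagram has been normalised, the counting is routine.
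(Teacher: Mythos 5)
Your proposal is correct and follows essentially the same route as the paper: the paper simply observes that the standard diagrams of $G1$ and $G2$ are reduced alternating, hence realise the minimal crossing number ($4+|v|$ and $3+v$ respectively), and the strict monotonicity in $|k|$ then gives at most one knot type per value of $n$. Your case (i) is exactly the paper's argument; the additional contingencies (Hara--Yamamoto, strand movements) turn out to be unnecessary.
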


\begin{proof} 
$G1$ is reduced alternating, and hence has minimal number of crossings. Thus, it is clear that the MCN$(G1)=4+|v|=n$. Similarly, $G2$ is reduced alternating and thus has minimal crossing number, so MCN$(G2)=3+v=n$. 
\end{proof}

\noindent We are now ready to prove Theorem 4.

\begin{theoremm} \label{thm4} The number of putative knots and links resulting from site-specific recombination on a substrate that is the twist knot $C(2,v)$ with MCN equal to $n$ grows linearly with $n^5$. 
\end{theoremm}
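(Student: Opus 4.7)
The plan is to assemble Theorems 1 and 2 with Lemmas 2 and 3 to obtain both an upper bound and a matching lower bound of order $n^5$.

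First I would use Theorems 1 and 2 to note that every knot or link $K$ arising as a product of (non\nobreakdash-distributive, processive, or single\nobreakdash-round) site\nobreakdash-specific recombination on $C(2,v)$ belongs to $F(p,q,r,s,t,u)\cup G_1\cup G_2$. For the upper bound, Lemma 2 already gives a constant $c_1$ (the proof of Lemma 2 yields $c_1=444$) such that the number of distinct knots and links in $F(p,q,r,s,t,u)$ with MCN equal to $n$ is at most $c_1 n^5$, and Lemma 3 gives at most one member of each of $G_1$ and $G_2$ with MCN equal to $n$. Adding these contributions bounds the total count of putative products with MCN$=n$ above by $c_1 n^5+2$.

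For the lower bound I would exhibit a subfamily of $F(p,q,r,s,t,u)$ whose members are all distinct with a known minimal crossing number. The natural choice is the subfamily $F_{S_4}(p,q,r,s,t,u)$ with $|p|,|r|,|t|\geq 2$ and signs of $p,q,r,s,t,u$ alternating appropriately so that the projection in Figure \ref{family} is reduced alternating. By Murasugi--Thistlethwaite (or by the reduced Montesinos case of Lickorish--Thistlethwaite already quoted in the proof of Lemma 2), such a diagram realises MCN, so MCN$=|p|+|q|+|r|+|s|+|t|+|u|$. Counting ordered tuples of nonzero integers summing (in absolute value) to $n$ with the required lower bounds gives on the order of $\binom{n-k}{5}$ possibilities for some fixed $k$, i.e.\ at least $c_2 n^5$ for all sufficiently large $n$ and some $c_2>0$.

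The main obstacle is not the counting but the \emph{distinguishability} step in the lower bound: one must be sure that different legal parameter tuples yield non\nobreakdash-equivalent knots and links, up to the obvious symmetries of the diagram (which are already accounted for by the interchangeability rules noted in the proof of Lemma 2, namely permuting the three rational tangles $R_1,R_2,R_3$ and the $\pm$ ambiguities inside each one). For Montesinos knots this is classical: two reduced Montesinos diagrams with $m=3$ rational tangles represent the same link only if their rational tangle parameters agree modulo the Bonahon--Siebenmann classification (equivalently, cyclic permutation of the $R_i$, together with mirror symmetry). So by pre\nobreakdash-selecting one representative from each orbit under these symmetries we lose only a bounded factor, and retain $\Omega(n^5)$ distinct types. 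Combined with the upper bound $c_1 n^5+2$ above, this shows that the count of putative products with MCN$=n$ is $\Theta(n^5)$, which is what ``grows linearly with $n^5$'' means.
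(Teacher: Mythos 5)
Your upper bound is exactly the paper's argument: Theorems 1 and 2 place every putative product in $F(p,q,r,s,t,u)\cup G1\cup G2$, Lemma 2 bounds the first family by a constant times $n^5$ (the paper's proof of Theorem 4 uses $4n^5\times 113=452n^5$ rather than $444n^5$, but the discrepancy is immaterial), and Lemma 3 contributes at most two more knot or link types. That half is fine. The paper stops there: ``grows linearly with $n^5$'' is used throughout Section 6 in the sense of an \emph{upper} bound $O(n^5)$ --- the whole point of the theorem is the contrast with the exponential growth of all knots and links, so that the products form a vanishingly small fraction. No lower bound is claimed or proved in the paper.

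The lower bound you add contains a genuine gap, and in fact cannot be repaired. You propose to exhibit $\Omega(n^5)$ distinct products by counting reduced alternating (or reduced Montesinos) members of the subfamily $F_{S_4}(p,q,r,s,t,u)$ with $|p|,|r|,|t|\geq 2$. But by Theorems 1 and 2 every product of recombination on $C(2,v)$ lying in $F(p,q,r,s,t,u)$ satisfies $|p|\leq 1$ (and for tyrosine recombinases even $|t|\leq 2$); the subfamily $F_{S_4}$ with $|p|>1$ appears in Lemma 2 only because that lemma counts the \emph{entire} family $F(p,q,r,s,t,u)$ to get an over-estimate, not because those knots arise as products. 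So your construction produces $\Omega(n^5)$ members of $F(p,q,r,s,t,u)$, not $\Omega(n^5)$ products. Worse, since $p$ is confined to $\{0,\pm1\}$ for all products, only five of the six parameters are free, and the number of admissible tuples with crossing sum $n$ is $O(n^4)$; hence the set of putative products with MCN $=n$ has size $O(n^4)$ and a matching $\Theta(n^5)$ lower bound is false. The Bonahon--Siebenmann distinctness argument is a reasonable ingredient, but it is being applied to the wrong set. If you want only what the theorem (as the paper uses it) asserts, delete the lower bound entirely; if you insist on a two-sided estimate for the actual products, the correct exponent to aim for is $4$, not $5$, using for instance $F_{S_3}(\pm1,q,r,s,t,u)$ with $q,r,s,t,u$ free.
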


\begin{proof} There are at most $111+2=113$ non-trivial, distinct families of knots and links that are putative products of site-specific recombination on a substrate that is $C(2,v)$; $111$ belong to the family of small Montesinos knots and links illustrated in Figure \ref{family} (the number of such knots and links is bounded above by $4n^5$) and two belong to the families $G1$ and $G2$ illustrated in Figures \ref{famsG1} and \ref{famsG2}. It follows that for a given $n$, after recombination on a twist substrate, the number of distinct knots and links  which have MCN$=n$ is bounded above by $4n^5\times 113=452n^5$. In particular, the number of distinct knots and links that belong to the families $G1, G2$ and/or $F(p,q,r,s,t,u)$ that have MCN$=n$ grows linearly with $n^5$.
\end{proof}

\noindent It follows from Theorem \ref{thm4}  that the proportion of all knots and (two-component) links which are contained in the families $F(p,q,r,s,t,u)$, $G1$ and $G2$ decreases exponentially as $n$ increases. Thus, for a knotted or linked product, knowing its MCN and that it belongs to one of these families allows us to significantly narrow the possibilities for its precise knot or link type. The model described herein thus provides an important step in characterizing DNA knots and links which arise as products of site-specific recombination.

\subsection{Products whose MCN is one more than the substrate}\label{MCN2}
\noindent We now prove a more directly applicable theorem. Site-specific recombination often increases the MCN of a knotted or linked substrate by one, see for example Table 1 in \cite{BFbio}. If the substrate is $C(2,v)$, with minimal crossing number $m$ and the product of a single recombination event has MCN$=m+1$, then we can further restrict the resulting knot or link type. Recall the the MCN$(C(2,v))=2+|v|$ for $v<0$ and MCN$(C(2,v))=1+v$ for $v>0$. 
\\\\We remark that site-specific recombination that increases the minimal crossing number of the product by one could result in a change in the number of components. For example, if the substrate is $C(2,2)$ which is a one component link (a knot) one of the possible products according to Theorem 5 is $T(2,4)$, a two component link.

\begin{theoremm} \label{onemoreMCNtheorem}Suppose that Assumptions 1, 2, and 3 hold for a particular recombinase-DNA complex with substrate $J=C(2,v), v\neq 0$ and denote the MCN$(J)=n>0$. Let $L$ be the product of a single recombination event and suppose MCN$(L)=n+1$. Then: 
\\\\If $v>0$, $L$ is either: $C(2,v+1), C(2,-v), C(-2,v), C(-2,-(1+v)), C(3,v), T(2,\pm(2+v)), F_{S_1}(0,q,2,s,2,u)$ where $u+s=v$, $F_{S_2}(\pm1,\pm1,2,s,\pm1,u)$ where $u+s=v$ or $s\neq0$ or $F_{S_3}(\pm1,0,2,s,2,u)$ where $u+s=v$ and $s,u\neq0$. \\\\If $v<0$ $L$ is either: $C(2,2+|v|), C(2,-(1+|v|)), C(-2,1+|v|), C(-2,-(2+|v|)), C(3,v), C(-4,v)$, $T(2,\pm(3+|v|))$ or $F_{S_2}(\pm1,\pm1,2,s,\pm1,u)$ for $u+s=v$.
\end{theoremm}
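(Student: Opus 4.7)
The plan is to combine Theorems 1 and 2 with the minimal crossing number computations developed in Section \ref{MCN1}. By Theorems 1 and 2, every product $L$ of a single recombination event on $J=C(2,v)$ belongs to one of the following: a clasp knot $C(r,s)$ with $|r|\le 4$, a torus link $T(2,m)$, a connected sum $T(2,m)\sharp C(2,s)$, a member of $F(p,q,r,s,t,u)$ with the parameter restrictions given in Theorems 1 and 2, or a member of $G_1(k)$ or $G_2(k)$. For each of these candidate product types I would compute (or bound) MCN$(L)$ and retain only those candidates for which MCN$(L)=n+1$.

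First I would record the two baseline formulas MCN$(C(2,v))=1+v$ for $v>0$ and $2+|v|$ for $v<0$, so that the target MCN$(L)=n+1$ becomes an explicit arithmetic condition on the parameters of $L$ in each of the two sign cases. For products whose natural projection (from the figures of Section \ref{theorems}) is reduced alternating or a reduced Montesinos diagram, Murasugi--Thistlethwaite and Lickorish--Thistlethwaite (Theorem 3) give MCN as the sum of the absolute values of the row parameters; for the non-alternating projections I would apply the strand-movement isotopies used in the proof of Lemma 2 to pass to a reduced alternating or reduced Montesinos representative whose MCN can then be read off directly.

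Next I would work through the subfamilies one at a time. For $C(r,s)$ products with $|r|\in\{1,2,3,4\}$, Buck--Flapan's formula MCN$(C(r,s))=|r|+|s|$ or $|r|+|s|-1$ (depending on signs) reduces MCN$(L)=n+1$ to a linear relation, pinning down $s$ in terms of $v$; this is the source of the options $C(2,v+1)$, $C(2,-v)$, $C(-2,v)$, $C(-2,-(1+v))$, $C(3,v)$ and $C(-4,v)$. For $T(2,m)$ we have MCN$=|m|$, yielding $T(2,\pm(2+v))$ or $T(2,\pm(3+|v|))$. For $F$-type products the additivity of MCN on a reduced Montesinos projection plus the restrictions $|r|\ge 2$, $|p|\le 1$, and $|t|\in\{1,2\}$ (tyrosine) or $t\neq 0$ (serine) forces $|p|+|q|+|r|+|s|+|t|+|u|=n+1$; after substituting the baseline $|s|+|u|$ contributions from the surviving twist-knot subarc this reduces to $s+u=v$ and leaves only the $F_{S_1}, F_{S_2}, F_{S_3}$ subfamilies listed in the theorem. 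For $G_1(k)$ and $G_2(k)$, Lemma 3 gives MCN$=4+|v|$ and $3+v$ respectively, which one checks never equals $n+1$ in either sign case, so both families drop out.

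The main obstacle is the bookkeeping: systematically verifying that every allowed combination of $(C\cap J,B\cap J)$ from Lemma 1 and Theorems 1--2 has been examined, that trivial or degenerate parameter choices (e.g.\ $s=0$ collapsing a clasp to a torus link, or $p+q=0$ collapsing an $F$-family member into a lower subfamily) are either excluded or subsumed by another listed case, and that the two sign regimes $v>0$ and $v<0$ are tracked consistently because of the sign-dependent MCN formula for $C(2,v)$. A secondary subtlety is that a single knot type may arise in several different subfamilies of Theorems 1--2 simultaneously, so one must check that the representative listed in the theorem statement captures every occurrence without artificially splitting chiral pairs that are already identified in the $G_i$ or $F$ notation.
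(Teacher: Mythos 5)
Your proposal follows essentially the same route as the paper: enumerate the candidate products coming from Lemma 1 and Theorems 1--2, compute MCN for each candidate via reduced alternating or reduced Montesinos (Lickorish--Thistlethwaite) projections, discard those whose MCN cannot equal $n+1$ (including $G1$ and $G2$, the composites, and $C(4,v)$), and solve the resulting arithmetic conditions separately for $v>0$ and $v<0$ --- exactly the paper's case analysis. The only detail you omit is the separate treatment of the small substrates $v=1$ (excluded, since the substrate degenerates to the unknot) and $v=-1$ (handled by directly listing the knots and links with MCN equal to $4$), which the paper dispatches before running the general argument for $n\geq 4$.
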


\noindent Table \ref{theorem5table} summarises this information.

\begin{proof} Firstly, note that $n\geq 2$. Note also that if $v=1$ then $n=2$, but there are no nontrivial knots with minimal crossing number equal to 2, so the substrate must be the unknot, which is considered in  \cite{BFmaths} and \cite{BFbio}. We exclude the case when $v=1$.
\\\\For $n=3$, $C(2,v)$ is the trefoil knot $3_1$ (i.e., $v=-1$) so $L$ must be the Figure of eight knot $4_1=C(2,-2)$ or the torus link $T(2, \pm4)$, since these are the only knots and links with minimal crossing number equal to 4. 
\\\\Now assume  that $n\geq4$, that is $v\leq-2$ or $v\geq3$. By Assumption 1, there is a projection of $J$ such that $B\cap J$ has at most one crossing. Since $J = C(2,v)$, the proof of Lemma 1 shows that $C\cap J$ has the  forms $C1, C2, C3, C4$ or $C5$ (Figure \ref{all the possibilities for substrate}).
When $C\cap J$ has form $C1$, then $u+s=v$. By Assumption 3 and Figures \ref{serinesmaths} and \ref{tyrosinesmaths},  the post-recombinant form of $B\cap J$ is one of those illustrated in Figure \ref{tyrosinesmaths}. Thus any knotted or linked product $L$ has one of the forms illustrated in Figure \ref{tyrosine products}.  
\\\\Now, suppose that $L$ has one of the forms illustrated when $C\cap J$ has form $C2, C3, C4$ or $C5$. $L$ cannot be either $T(2,2)\sharp C(2,v)$ or $T(2,-2)\sharp C(2,v)$ because MCN$(L)=n+2$. $L$ can certainly not be a Hopf link, an unknot or $C(2,v)$ with a trivial component, since MCN$(L)\neq n+1$. Finally, $L$ cannot be $C(4,v)$ because if $v>0$, MCN$(L)=3+v$ and of $v<0$, MCN$(L)=4+|v|$.
\\\\If $L=C(2,n)$ then $n=1+v$ or $-v$ when $v>0$ or $n=2+|v|$ or $v-1$ when $v<0$. If $L=C(k,v)$ then $k=3\  \forall v$, $k=-2$ for $v>0$ and $k=-4$ for $v<0$. If $L=T(2,n)$ then for $v>0$ $n=\pm(v+2)$ and for $v<0$ $n=\pm(3+|v|)$. 
\\\\If $L=F_{S_1}(0,q,2,s,t,u)$ for $s+u=v$ for some value of $t$ then $L$ has a projection in this product subfamily with $t=\pm2$. So we can assume $L$ has a projection of the form $F_{S_1}(0,q,2,s,\pm2,u)$ with $u+s=v$. If $t=-2$, for $v<0$ MCN$(L)=1+|u|+|s|+1=3+|v|=n+1$ so this case is possible and for $v>0$ MCN$(L)=|-2|+u+1+s=3+v\neq n+1$ so this case in not possible. If $t=+2$, for $v<0$ MCN$(L)=2+|u|+2+|s|=4+|v|\neq n+1$ so this case is not possible and for $v>0$ MCN$(L)=1+u+1+s=2+v=n+1$ so this case is possible. So $L=F_{S_1}(0,q,2,s,-2,u)$ only for $v<0$ and $L=F(2,s,2,u)$ for $v>0$.
\\\\Now suppose that $L$ has one of the forms illustrated when $C\cap J$ has form $C1$. Suppose $L$ has a projection of the form $F_{S_2}(\pm1,q,2,s,\pm1,u)$ with $u+s=v$. For $v>0$ and $s=0$, $q$ must be $0$, however, this is isotopic to $T(2,u)$, which has MCN$=v$, thus this is not allowed. For $v>0$ and $s\neq0$, $q=\pm1$ and for $v<0$, $q=\pm1$. That is, for $v>0$, $L=F_{S_2}(\pm1,\pm1,2,s,\pm1,u)$ for $s\neq0$ and $s+u=v$, and for $v<0$ $L=F_{S_2}(\pm1,\pm1,2,s,\pm1,u)$ for $u+s=v$.
\\\\If $L$ has a projection of the form $F_{S_3}(\pm1,q,2,s,t,u)$ for some value of $t$, then $L$ has a projection in this product subfamily with $t=\pm2$. Thus we now assume that $L$ has a projection of the form $F_{S_3}(\pm1,q,2,s,\pm2,u)$ with $u+s=v$. If $t=2$, for $v<0$, MCN$(L)=2+|u|+2+|s|+|q|=4+|v|+|q|>n+1$ for any value of $q$, so this case is not possible, for $v>0$ MCN$(L)=1+u+1+s+|q|=2+v+|q|$, so this case is possible for $q=0$. In this particular case, if one of $u$ or $s$ equals 0, then MCN$(L)=3+v+|q|>n+1$ for any value of $q$, so $L=F(q,2,s,2,u)$ only for $v>0$ and $s,u\neq0$. If $t=-2$, for $v>0$ MCN$(L)=|-2|+u+1+s+|q|=3+v+|q|>n+1$ for any value of $q$, so this case is not allowed and for $v<0$ MCN$(L)=|-1|+|u|+2+|s|+|q|=3+|v|+|q|$ so this case is possible for $q=0$. In this particular case, if one of $u$ or $s$ equals 0, then MCN$(L)=4+|v|+|q|>n+1$ for any value of $q$, so $L=F(q,2,s,-2,u)$ only for $v<0$ and $s,u\neq0$. In summary, $L=F_{S_3}(\pm1,0,2,s,-2,u)$ for $v<0$ and $L=F(0,2,s,2,u)$ for $v>0$, both with $u,s\neq0$ are the only possibilities allowed.
\\\\Finally, suppose $L$ has a projection of the form $G1$. Then $v<0$ and MCN$(L)=4+|v|>3+|v|=n+1$, thus $L$ cannot have this conformation.
Suppose has a projection of the form $G2$, then $v>0$ and  MCN$(L)=3+v>2+v=m+1$ and so $L$ cannot have this projection either. This completes the proof.
\end{proof}

\section{Applications of our model}\label{applications}

\noindent

\noindent We discuss how the model developed here can be a useful tool to analyse previously uncharacterized data in a variety of setttings in \cite{KDbio}. \\\\These applications fall into four broad categories:  \textit{Application 1:} our model can help determine the order of products of processive recombination. \textit{Application 2:} in the common situations where the products of site-specific recombination have MCN one more than the MCN of the substrate, our model can help reduce the number of possibilities for these products. \textit{Application 3:} our model can help predict products of processive and distributive recombination. \textit{Application 4:}  our model can help distinguish between products of processive and distributive recombination. 
\\\\To give a flavour of how to use this model, we conclude with a simple example of Applications 1 and 2.
\\\\\textit{Application 1.} Our model can be used to help understand processive recombination mediated by a serine recombinase. Using Figure \ref{serine products} and Table 1 which summarize the conclusions of Theorem 2, we can narrow the possibilities for the sequence of products in multiple rounds of processive recombination. Suppose that for a twist knot substrate of the form $C(-2,v)$ with $v\neq 0$, experimental conditions minimize distributive recombination and the products of multiple rounds of processive recombination are twist knots, unknots (or $C(-2,s)+O$) and the connected sum of a torus knot and a twist knot $C(-2,s)\sharp T(2,m)$. Then from  Figure \ref{serine products}$(g)$ we can determine that recombination happens from the twist knot substrate to the clap knot with a trivial component $C(-2,v)+O$, product of the first round of recombination, to the connected sum of torus knots and clasp knots, product of the second round of recombination. Moreover, any products of further rounds of recombinations are connected sums of the form $C(-2,v)\sharp T(2,m)$ with increasing minimal crossing number.
\\\\\textit{Application 2.} We now demonstrate an application of Theorem 5. Suppose the twist knots $C(2,5)$ and $C(2,7)$ (which have MCN equal to 6 and 8 respectively) are used as substrates for a site-specific recombination reaction with a tyrosine recombinase, where experimental conditions eliminate distributive recombination  and products are knots and links with minimal crossing number 7 and 9. In this case the minimal crossing number is not sufficient to determine the knot type, since there are 7 knots, 8 two-component links and 1 three-component link with MCN=7 and 49 knots, 61 two-component links and 22 three-component links with MCN=9. However, we can use Theorem 6 and Table 7 to significantly reduce the number of possibilities for these products. It follow from Theorem 6 that the possible seven-crossing products are $7_1$, $7_2$, $7_3$, $7_6$, $7^2_2$, , $7_3^2$, or $3_1\sharp4_1$; and the possible nine-crossing products are $9_1$, $9_2$, $9_3$, $9_8$, $9_{11}$, $9^2_1$, , $9^2_{10}$, $6_1\sharp 3_1$, or $4_1\sharp 5_2$. In Table \ref{thm7app} we show how to do this. We have reduced from 16 choices for 7-noded knots to just 7, from 132 possibilities for 9-noded knots and links to just nine possibilities. Thus, Theorem 6 can help to significantly reduce the knot and link type of products of site-specific recombination that add one crossing to the substrate.

\section{Acknowledgements}
We wish to thank Ken Baker, Erica Flapan, Julian Gibbons, Mauro Mauricio and Ken Millett for insightful discussions, as well as the referees for their careful reading and helpful comments. DB is supported in part by EPSRC Grants EP/H0313671, EP/G0395851 and EP/J1075308, and thanks the LMS for their Scheme 2 Grant.  KV is supported by EP/G0395851.



\clearpage
\clearpage
\begin{center}
\begin{table}
\begin{center}
\begin{tabular}[h]{ll p{10cm}}\hline
\textbf{Recombinase type} & \textbf{Substrate} & \textbf{Product}\\\hline
Tyrosine & $C(2,v)$ & unknot, $C(r,s)$ for $r=1,2,3,4$, $T(2,m)$, Hopf link, $T(2,m)\sharp C(2,s)$, $F(p,q,r,s,t,u)$ with $|t|=1$ or $2$,$|p|\leq 1$, knots or links in families $G1$ or $G2$\\
Serine & $C(2,v)$ & $C(r,s)$, $T(2,m)$, $T(2,m)\sharp C(2,s)$, $F(p,q,r,s,t,u)$ with $|p|\leq 1$ and $t\neq0$\\\hline
\end{tabular}
\end{center}
\caption{\footnotesize{Products of non-distributive recombination predicted by our model.}}
\end{table}
\end{center}

\begin{center}
\begin{table}[h]{\footnotesize
\begin{center}
\begin{tabular}{l l l l p{2.1cm}}
\hline
Values of $p,q,r,s,t,u$ for $r\geq 0$& 
Minimal crossing form & 
Strands moved & 
MCN as a sum of non-negative integers & 
Upper bound on number of links\\\hline
$p=0,t,r\geq2, u,s\leq-1$&Reduced Montesinos&0&$t+|u|+r+|s|$&$n^3$\\
$p=0,t,r\geq2,u,s\geq1$&Reduced alternating&2&$(t-1)+(u-1)+(r-1)+(s-1)+2$&$n^3$\\
$p=0,t\leq-2, r\geq2,u,s\geq1$&Reduced alternating&1&$(r-1)+(s-1)+|t|+u+1$&$2n^3$\\
$p=0,s\geq1,u\leq-1, \mbox{ wlog }|u|<|s|$&Reduced alternating&$|u|\ \ \ +1?$&$|t|+r+(s-|u|)\ \ \ -1?$&$4n^2$\\
$p=0,s=-u$&$T(2,t+r)$&$|s|$&$|t+r|$&$1$\\
$p=0,s,u=0$&$T(2,t+r)$&$0$&$|t+r|$&$1$\\
$p=0,r=1,t=\pm1$&$T(2,(u\pm1)+(s+1))$&$0$&$|(u\pm1)+(s+1)|$&$1$\\
$p=0,r=1$&$C(t,u+s+1)$&1?&$|t|+|u+s+1|\ \ \ -1?$&4n\\
$p=0,u=0,t,r\geq2,s\geq1$&Reduced alternating&$2$&$(t-1)+(r-1)+(s-2)+2$&$n^2$\\
$p=0,u=0,t\leq-2,r\geq2,s\leq-1$&Reduced alternating&$1$&$(|t|-1)+r+(|s|-1)+1$&$n^2$\\
$p=0,u=0,t\leq-2,r\geq2,s\geq1$&Reduced alternating&$1$&$|t|+(r-1)+(s-1)+1$&$n^2$\\
$p=0,u=0,t,r\geq2,s\leq-1$&Reduced alternating&$0$&$t+r+|s|$&$n^2$\\\hline
$p=0,t=0$&$T(2,r)$&$0$&$|t|$&$1$\\
$p,r,t=0$&unlink&$0$&$0$&$0$\\
$p,r,t=\pm1$&$K(u\pm1,s\pm1,q\pm1)$&$0$&$(|u\pm1|)+(|s\pm1|)+(|q\pm1|)$&$8n^2$\\\hline
\end{tabular}
\end{center}
\caption{Theorem 4: The minimal crossing forms of knots and links in subfamilies $F_{S_1}(0,q,r,s,t,u), 5,6$ and $7$ illustrated in Figure \ref{counting}.}}
\end{table}
\end{center}

\begin{center}
\begin{table}[h]{\footnotesize
\begin{center}
\begin{tabular}{p{4.3cm} l l l p{2.3cm}}
\hline
Values of $p,q,r,s,t,u$ for $r\geq 0$& 
Minimal crossing form & 
Strands moved & 
MCN as a sum of non-negative integers & 
Upper bound on number of links\\\hline
$p,t=\pm1,u,q=0$&$C(r,s)+O$&$1?$&$|r|+|s|\ \ \ -1?$&$4n$\\
$p,t=\pm1,r=1$&$K(u,s+1,q)$&0&$|u|+|s+1|+|q|$&$4n^2$\\
$p,t=\pm1,r=1,s=1$&$T(2,u)\sharp T(2,q)$&$0$&$|u|+|q|$&$2n$\\
$p,t=\pm1,r>1,q=0$&$T(2,u)\sharp C(r,s)$&$1?$&$|u|+r+|s|\ \ \ -1?$&$4n^2$\\
$p,t=\pm1,r>1,uq=-1$&$T(2,r)$&$0$&$r$&$1$\\
$p,t=\pm1,r > 1, uq = 1, s = 0$& $C(\pm2, r)$& $0$ &$2+r\ \ \ -1?$& $2$\\
$p,t=\pm1,r > 1, u\geq1, q = 1, s > 0$& Reduced alternating &1& $u + (r - 1) + (s - 1) + 2$ $n^2$\\
$p,t=\pm1,r > 1, u = q = 1, s < 0$& Reduced alternating &1& $r + (-s - 1) + 2$& $n$\\
$p,t=\pm1,r > 1, u\leq-1, q = -1, s > 0$& Reduced alternating &$2$& $-u + (r - 1) + (s - 2) + 2$ $n^2$\\
$p,t=\pm1,r > 1, u, q < 0, s\leq0$& Reduced alternating &0& $-u - q + r -s$& $n^3$\\
$p,t=\pm1,r > 1, u, q > 1, s = 0$& Reduced alternating &$2$ &$(u - 1) + (q - 1) + (r -2) + 2$ $n^2$\\
$p,t=\pm1,r > 1, u < -1, q > 1, s = 0$& Reduced alternating& 1&$-u + (q - 1) + (r - 1) + 1$& $n^2$\\
$p,t=\pm1,r > 1, |u| > 1, |q| = 1, s = 0$ &$C(r \pm 1, u)$& 0 &$-u + (r \pm 1)\ \ \ -1?$& $4n$\\
$p,t=\pm1,r > 1, qs = -1$& $T (2, r + u \pm 1)$& 1 &$|r + u \pm 1|$ &1\\
$p,t=\pm1,r > 1, u > 0, q = 1, s < 0$ &Reduced alternating& 1& $u + r + (-s - 1) + 1$& $n^2$\\
$p,t=\pm1,r>1,u\leq-2,q=1,s\leq-2$&Reduced alternating&1&$(-u-2)+r+(-s -2) +1$&$ n^2$\\
$p,t=\pm1,r > 1, u, q > 0, s = 1$ &Reduced alternating& 1& $u + q + (r - 1) + 1$ &$n^2$\\
$p,t=\pm1,r > 1, u < -1, q > 0, s = 1$ &Reduced alternating &1& $(-u - 1) + q + (r - 1)$& $n^2$\\
$p,t=\pm1,r > 1, u < -1, q = 1, s > 1$& Reduced alternating &2& $(-u - 1) + (r - 1) + (s - 1) + 2$&$ n^2$\\
$p,t=\pm1,r > 1, u > 1, q = -1, s < 0$& Reduced alternating& 1 &$(u - 1) + r - s + 1$& $n^2$\\
$p,t=\pm1,r > 1, u > 1, q = -1, s = 2 $&Trivial &2& $0\neq n$&$ 0$\\
$p,t=\pm1,r > 1, p > 1, q = -1, s > 2$& Reduced alternating &3 &$(u - 2) + (r - 1) + (s - 3) + 2 $&$n^2$\\
$p,t=\pm1,r > 1, |u|, |q| > 1, s < 0$& Reduced Montesinos& 0 &$|u| + |q| + r - s$&$ 4n^3$\\
$p,t=\pm1,r > 1, |u|, |q| > 1, s > 1$& Reduced Montesinos &1&$ |u| + |q| + (r - 1) + (s - 1) + 1$&$ 4n^3$\\
$p,t=\pm1,r > 1, u < -1, q = -2, s = 1$& $K(u,r-1,2)$ &1& $-u + 2 + (r - 1)$& $n$\\
$p,t=\pm1,r > 1, u, q < -2, s = 1$& Hara–Yamamoto &1&$-u + (-q - 1) + (r - 1)$ &$n^2$\\\hline
\end{tabular}
\end{center}
\caption{Proof of Theorem 4: The minimal crossing forms of knots and links in subfamily $F_{S_2}(\pm1,q,r,s,\pm1,u)$ illustrated in Figure  \ref{counting}.}}
\end{table}
\end{center}

\begin{center}
\begin{table}[h]{\footnotesize
\begin{center}
\begin{tabular}{p{4.5cm} l l p{4.5cm} p{2.5cm}}
\hline
Values of $p,q,r,s,t,u$ for $r\geq 0$& 
Minimal crossing form & 
Strands moved & 
MCN as a sum of non-negative integers & 
Upper bound on number of links\\\hline
$p=1,t,r\geq2,q=-1,-1\leq u,s\leq0, \mbox{ not both }u,s=0$&Reduced alternating&$0$&$|r|+|s|+|t|+|u|+(|q|(\pm1))$&$n^4$\\
$p=1,t,r\geq2,u,s,q\leq-1$&Reduced Montesinos&$0$&$|r|+|s|+|t|+|u|+(|q|(\pm1))$&$n^4$\\
$p=1,t,r\geq2,u,s,q=1$&Reduced alternating&$2$&$(r-1)+(t-1)+3$&$n$\\
$p=1,t,r\geq2,u=0,s,q=1$&Reduced alternating&$3$&$(r-2)+(t-3)+1$&$2n$\\
$p=1,t,r\geq2,u=0,s,q>1$&Reduced alternating&$1$&$(r-1)+(s-1)+t+1$&$2n^2$\\
$p=1,t,r\geq2,u,s,q>1$&Reduced alternating&$2$&$(r-1)+(t-1)+(s-1)+(u-1)+2$&$n^4$\\
$p=1,t,r\geq2,-1\leq u,s\leq0,q=1\mbox{ not both }u,s=0$&Reduced alternating&$2$&$(r-1)+(t-1)+1$&$n$\\
$p=1,t,r\geq2,u,s\leq-2,q\geq2$&Reduced Montesinos&$0$&$r+t+|u|+|s|+q$&$n^4$\\
$p=1,t\leq-2,r\geq2,0\leq u\leq1,-1\leq s\leq0,q=-1$&Reduced alternating&$1$&$r+|t|+u+|s|$&$4n^3$\\
$p=1,t,s,q\leq-2,r,u\geq2$&Reduced Montesinos&$0$&$r+|t|+u+|s|+|q|$&$4n^4$\\
$p=1,r\geq2,t\leq-2,-1\leq u,s\leq0,q=1$&Reduced Montesinos&$1$&$r+|t|+(|u|-1)+|s|+1$&$2n^3$\\
$p=1,r\geq2,t\leq-2, u,s\leq-2,q\geq2$&Reduced Montesinos&$1$&$r+(|t|-1)+(|u|-1)+|s|+q+1$&$2n^4$\\
$p=1,t,r\geq2,s=0 u=0,q=-1$&Reduced alternating&$1$&$(r-1)+(t-1)+1$&$2n$\\
$p=1,t,r\geq2,u=1,s,q=-1$&Reduced alternating&$2$&$(r-1)+(t-2)+1$&$2n$\\
$p=1,t,r,u\geq2,q,s\leq-2$&Reduced Montesinos&$1$&$r+(t-1)+(u-1)+|s|+|q|$&$2n^4$\\
$p=1,r\geq2,t\leq-2,u=0,s=1,q=-1$&Reduced alternating&$1$&$|t|+r$&$2n$\\
$p=1,r\geq2,t\leq-2,u,s=1,q=-1$&Reduced alternating&$2$&$|t|+r-1$&$n$\\
$p=1,r,u,s\geq2,t,q\leq-2$&Reduced Montesinos&$1$&$(r-1)+(s-1)+|t|+u+|q|+1$&$n^4$\\
$p=1,t,r\geq2,u=0,q=1,s=-1$&Reduced alternating&$0$&$t+r+2$&$2n$\\
$p=1,t,r\geq2,u,q=1,s=-1$&Reduced alternating&$2$&$(t-1)+r+2$&$2n$\\
$p=1,t,r,u,q\geq2,s\leq-1$&Reduced Montesinos&$1$&$(t-1)+(u-1)+r+|s|+q+1$&$2n^4$\\
$p=1,r\geq2,t\leq-2,u=0,0\leq s\leq1,q=1$&Reduced alternating&$0$&$|t|+r+2$&$2n$\\
$p=1,r\geq2,t\leq-2,0\leq s\leq1,u,q=1$&Reduced alternating&$1$&$(|t|-1)+r+3$&$2n$\\
$p=1,r\geq2,t\leq-2,u,s,q\leq-1$&Reduced Montesinos&$2$&$(|t|-1)+(|u|-1)+|s|+|q|+r+1$&$2n^4$\\
$p=1,t,r\geq2,u=1,s=0,q=-1$&Reduced alternating&$1$&$(t-1)+r$&$2n$\\
$p=1,t,r\geq2,u,s=1,q=-1$&Reduced alternating&$2$&$(t-1)+(r-1)+1$&$n$\\
$p=1,t,r,s,u\geq2,q\leq-1$&Reduced Montesinos&$2$&$(t-1)+(r-1)+(u-1)+(s-1)+|q|+1$&$n^4$\\
$p=1,r\geq2,t\leq-2,0\leq u\leq1,s=-1,q=1$&Reduced alternating&$2$&$(t-2)+(r-1)+1$&$2n$\\
$p=1,r,u\geq2,t\leq-2,q\geq1,s\leq-1$&Reduced Montesinos&$0$&$r+u+q+|t|+|s|$&$2n^4$\\
$p=1,r=s=1$&$T(2,q)\sharp C(t,u)$&$1?$&$(|q|+1)+(|t|\ \ \ -1?)+(|u|\ \ \ -1?) \ \  \ +1?$&$4n^2$\\
$p=1,q=0$&$C(r,s)\sharp C(t,u)$&$2?$&$(|r|\ \ \ -1?)+(|s|\ \ \ -1?)+(|t|\ \ \ -1?)+(|u|\ \ \ -1?)+1\ \ \ +2?$&$8n^3$\\
$p=1,q=0,us=-1$&$T(2,r)\sharp T(2,t)$&$2$&$|t|+|r|+1$&$2n$\\
$p=1,q,s=0$&$T(2,r)\sharp C(t,u)$&$1?$&$(|t|\ \ \ -1?)+(|u|\ \ \ -1?)+|r|+1\ \ \ +1?$&$4n^2$\\
$p=1,u,s=0$&$C(t+r,q)$&$1?$&$(|t+r|\ \ \ -1?)+((|q|+1)-1?)\ \ \ +1?$&$4n$\\
$p=1,u,s,q=0$&$T(2,t+r)$&$0$&$|t+r|+1$&$1$\\
$p=1,t,r=0$&unknot&$|q|$&$0$&$0$\\\hline
\end{tabular}
\end{center}
\caption{Proof of Theorem 4: The minimal crossing forms of knots and links in subfamily $F_{S_3}(\pm1,q,r,s,t,u)$ illustrated in Figure  \ref{counting}.}}
\end{table}
\end{center}

\begin{center}
\begin{table}[h]{\footnotesize
\begin{center}
\begin{tabular}{p{4.5cm} l l p{4.5cm} p{2.5cm}}
\hline
Values of $p,q,r,s,t,u$ for $r\geq 0$& 
Minimal crossing form & 
Strands moved & 
MCN as a sum of non-negative integers & 
Upper bound on number of links\\\hline
$r,t,p\geq2,-1\leq u,s,q\leq0, \mbox{no two of }u,s,q=0$&Reduced alternating&$0$&$t+|u|+r+|s|+p+|q|$&$2n^2$\\
$r,t,p\geq2,u,s,q\leq-1$&Reduced Montesinos&$0$&$t+|u|+r+|s|+p+|q|$&$2n^5$\\
$r,t,p\geq2,0\leq u,s,q\leq1, \mbox{no two of }u,s,q=0$&Reduced alternating&$3$&$(t-1)+(|u|-1)+(r-1)+(|s|-1)+(p-1)+(|q|-1)+3$&$2n^2$\\
$r,t,p\geq2,u,s,q\geq2$&Reduced Montesinos&$3$&$(t-1)+(|u|-1)+(r-1)+(|s|-1)+(p-1)+(|q|-1)+3$&$2n^5$\\
$r\geq2,t,p\leq-2,u,q>1,s<-1$&Reduced Montesinos&$0$&$|t|+|p|+|s|+r+u+q$&$2n^5$\\
$r\geq2,t,p\leq-2,0\leq u,q\leq1,-1\leq s\leq0\mbox{ no two of }u,s,q=0$&Reduced alternating&$0$&$|t|+|p|+|s|+r+u+q$&$2n^2$\\
$t,r\geq2,p\leq-2,u,s<-1,q>1$&Reduced Montesinos&$0$&$|u|+|p|+|s|+r+t+q$&$4n^5$\\
$t,r\geq2,p\leq-2,-1\leq u,s\leq0,0\leq q\leq1,\mbox{ no two of }u,s,q=0$&Reduced alternating&$0$&$|u|+|p|+|s|+r+t+q$&$4n^2$\\
$t,r\geq2,p\leq-2,u=0,s,q\geq1$&Reduced alternating&$1$&$t+(r-1)+(s-1)+|p|+3+q$&$4n^5$\\
$t,r\geq2,p\leq-2,u,s,q\geq1$&Reduced alternating&$2$&$(t-1)+(u-1)+(r-1)+(s-1)+|p|+2+q$&$4n^5$\\
$r\geq2,t,p\leq-2,u,q\leq-1,0\leq s\leq1$&Reduced alternating&$2$&$(|t|-1)+(|u|-1)+(|p|-1)+(|q|-1)+|s|+2+r$&$2n^5$\\
$r\geq2,t,p\leq-2,u=0,s,q\leq-1$&Reduced alternating&$1$&$|t|+|u|+(|p|-1)+(|q|-1)+|s|+2+r$&$2n^5$\\
$r\geq2,t,p\leq-2,s,q\geq1,0\leq u\leq1$&Reduced alternating&$1$&$(r-1)+(s-1)+|p|+q+|t|+u+1$&$2n^5$\\
$r\geq2,t,p\leq-2,u,q\geq1,s=0$&Reduced alternating&$0$&$r+|p|+q+|t|+u$&$2n^5$\\
$r\geq2,t,p\leq-2,q\leq1,-1\leq s,u\leq0$&Reduced alternating&$1$&$(|p|-1)+(|q|-1)+|s|+r+|s|+t+1$&$4n^5$\\
$p,r,t\geq2,u,s\geq0,q\leq0, \mbox{no two of }u,s,q=0$&Reduced Montesinos&$2$&$(t-1)+(u-1)+(r-1)+(s-1)+p+|q|+2$&$3n^5$\\
$p,r,t\geq2,u\geq0,q,s\leq0, \mbox{no two of }u,s,q=0$&Reduced Montesinos&$1$&$(t-1)+(u-1)+|r|+|s|+p+|q|+1$&$3n^5$\\
$p,r\geq2,t\leq-2,u,s\geq0,q\leq0, \mbox{no two of }u,s,q=0$&Reduced Montesinos&$1$&$|t|+u+(r-1)+(s-1)+p+|q|+1$&$2n^5$\\
$p,r\geq2,t\leq-2,u,q\geq0,s\leq0, \mbox{no two of }u,s,q=0$&Reduced Montesinos&$1$&$|t|+u+(p-1)+(q-1)+r+|s|+1$&$2n^5$\\
$t,r\geq2,p\leq-2,u,s\geq0,q\leq0, \mbox{no two of }u,s,q=0$&Reduced Montesinos&$3$&$(|t|-1)+(|u|-1)+(p-1)+(q-1)+(r-1)+(s-1)+3$&$2n^5$\\
$t,r\geq2,p\leq-2,s\geq0,u,q\leq0, \mbox{no two of }u,s,q=0$&Reduced Montesinos&$2$&$t+u+(|p|-1)+(|q|-1)+(r-1)+(s-1)+2$&$2n^5$\\
$t,r\geq2,p\leq-2,u\geq0,s,q\leq0, \mbox{no two of }u,s,q=0$&Reduced Montesinos&$2$&$r+s+(|p|-1)+(|q|-1)+(t-1)+(u-1)+2$&$2n^5$\\
$r\geq2,t,p\leq-2,u,s\geq0,q\leq0, \mbox{no two of }u,s,q=0$&Reduced Montesinos&$2$&$|t|+u+(|p|-1)+(|q|-1)+(r-1)+(s-1)+2$&$2n^5$\\
$p,r\geq2,t\leq-2,s,u,q\leq0, \mbox{no two of }u,s,q=0$&Reduced Montesinos&$1$&$p+|q|+(|t|-1)+(|u|-1)+r+|s|+1$&$2n^5$\\
$r\geq2,t,p\leq-2,u\geq0,s,q\leq0, \mbox{no two of }u,s,q=0$&Reduced Montesinos&$1$&$|t|+u+(|p|-1)+(|q|-1)+r+|s|+1$&$2n^5$\\
$r\geq2,t,p\leq-2,s\geq0,u,q\leq0, \mbox{no two of }u,s,q=0$&Reduced Montesinos&$3$&$(|t|-1)+(|u|-1)+(|p|-1)+(|q|-1)+(r-1)+(s-1)+3$&$n^5$\\
$r\geq2,t,p\leq-2,s,u,q\leq0, \mbox{no two of }u,s,q=0$&Reduced Montesinos&$2$&$(|t|-1)+(|u|-1)+(|p|-1)+(|q|-1)+r+s+2$&$n^5$\\
$r=1,s=-1$&$C(t,u)\sharp C(p,q)$&$2?$&$(|t|\ \ \ -1?)+(|u|\ \ \ -1?)+(|p|\ \ \ -1?)+(|q|\ \ \ -1?)\ \ \ +2?$&$8n^2$\\
$r=1,s=-1,p=\pm1$&$T(2,q)\sharp C(t,u)$&$1?$&$(|t|\ \ \ -1?)+(|u|\ \ \ -1?)+(|q|\pm1)\ \ \ (+1?)$&$8n^2$\\
$r=1,s=-1,p=\pm1,q=\mp2$&$T(2,u)\sharp T(2,q)$&$0$&$(|u|\pm1)+(|q|\pm1)$&$2n$\\\hline
\end{tabular}
\end{center}
\caption{Proof of Theorem 4: The minimal crossing forms of knots and links in subfamily $F_{S_4}(p,q,r,s,t,u)$ illustrated in Figure  \ref{counting}.}}
\end{table}
\end{center}

\begin{center}
\begin{table}[h]
\begin{center}
\begin{tabular}[h]{lcl}
When &$L=$& for\\\hline 
$v>0$&$C(2,n)$& $n=1+v$ or $-v$\\
$v<0$&        & $n=2+|v|$ or $-(|v|+1)$\\
$v>0$&$C(-2,n)$&$n=v$ or $-(1+v)$\\
$v<0$&        & $n=1+|v|$ or $-(|v|+2)$\\
$\forall v$& $C(k,v)$& $k=3$\\
$v>0$&       & $k=-2$\\
$v<0$&       & $k=-4$\\
$v>0$& $T(2,n)$& $n=\pm(2+v)$\\
$v<0$&         & $n=\pm(3+|v|)$\\
$v>0$& $F_{S_1}(0,q,2,s,2,u)$&  $u+s=v$\\
$v>0$&$F_{S_2}(\pm1,q,2,s,\pm1,u)$ &$u+s=v, s\neq0, q=\pm1$\\
$v<0$&$F_{S_2}(\pm1,q,2,s,\pm1,u)$ &$u+s=v, q=\pm1$\\
$v>0$&$F_{S_3}(\pm1,q,2,s,2,u)$ &$u+s=v, s,u\neq0, q=0$\\\hline
\end{tabular}
\end{center}
\caption{Summary of Theorem 5.}
\label{theorem5table}
\end{table}
\end{center}

\begin{center}
\begin{table*}
\begin{center}
\begin{tabular}[b]{|l|l|}
\hline
\textbf{Products with 7 crossings} & \textbf{Products with 9 crossings}\\\hline
$C(2,6)=7_2$* & $C(2,8)=9_2$*\\
$C(2,-5)=7_2$* & $C(2,-7)=9_2$*\\
$C(-2,5)=7_2$* & $C(-2, 7)=9_2$*\\
$C(-2,-6)=7_2$*& $C(-2,-8)=9_2$*\\
$C(3,5)=5_1^2$ & $C(3,7)=9^2_1$*\\
$T(2,\pm7)=7_1$* & $T(2,\pm9)=9_1$*\\
$F_{S_1}(0,q,2,1,2,4)=7^2_3$* & $F_{S_1}(0,q,2,1,2,6)=9^2_{10}$*\\
$F_{S_1}(0,q,2,2,2,3)=7^2_3$* & $F_{S_1}(0,q,2,2,2,5)=9^2_{10}$*\\
$F_{S_2}(\pm1,1,2,1,\pm1,4)=7_3$* & $F_{S_1}(0,q,2,3,2,4)=9^2_{10}$*\\
$F_{S_2}(\pm1,-1,2,1,\pm1,4)=5_1$ & $F_{S_2}(\pm1,1,2,1,\pm1,6)=9_3$*\\
$F_{S_2}(\pm1,1,2,2,\pm1,3)=7_2^2$* & $F_{S_2}(\pm1,-1,2,1,\pm1,6)=7_1$ \\
$F_{S_2}(\pm1,-1,2,2,\pm1,3)=$unlink &$F_{S_2}(\pm1,1,2,2,\pm1,5)=7_2^2$\\
$F_{S_2}(\pm1,1,2,3,\pm1,2)=7_6$*& $F_{S_2}(\pm1,-1,2,2,\pm1,5)=$Hopf link\\
$F_{S_2}(\pm1,-1,2,3,\pm1,2)=$unknot & $F_{S_2}(\pm1,1,2,3,\pm1,4)=9_{11}$*\\
$F_{S_2}(\pm1,1,2,4,\pm1,1)=7^2_3$* & $F_{S_2}(\pm1,-1,2,3,\pm1,4)=5_2$\\
$F_{S_2}(\pm1,-1,2,4,\pm1,1)=$Hopf link & $F_{S_2}(\pm1,1,2,4,\pm1,3)=7_2^7$\\
&$F_{S_2}(\pm1,-1,2,4,\pm1,3)=5_1^2$\\
&$F_{S_2}(\pm1,1,2,5,\pm1,2)=9_8$*\\
&$F_{S_2}(\pm1,-1,2,5,\pm1,2)=4_1$\\
&$F_{S_2}(\pm1,1,2,6,\pm1,1)=9_{10}^2$*\\
&$F_{S_2}(\pm1,-1,2,6,\pm1,1)=$Hopf link\\
&$F_{S_3}(\pm1,0,2,1,2,6)=7_2$\\
$F_{S_3}(\pm1,0,2,1,2,4)=5_2$ & $F_{S_3}(\pm1,0,2,2,2,5)=6_1\sharp 3_1$*\\
$F_{S_3}(\pm1,0,2,2,2,3)=3_1\sharp4_1$*& $F_{S_3}(\pm1,0,2,3,2,4)=4_1\sharp 5_2$*\\\hline
\end{tabular}
\end{center}
\caption{\small{Example of a possible application to Theorem \ref{onemoreMCNtheorem}. Given recombination mediated by a tyrosine recombinase on the substrates $C(2,5)$ (MCN $= 6$) and $C(2,7)$ (MCN $= 8$)  where experimental conditions eliminate distributive recombination, we use Table 6 to list all the possible 7 and 9 noded products of this reaction.  Only the products that are isotopic to a knot and link with MCN one more than the substrate are possible products of this reaction and we denote these with a star (*). }}
\label{thm7app}
\end{table*}
\end{center}

\clearpage
\clearpage
\begin{figure}
\begin{center}
\includegraphics[width=10cm]{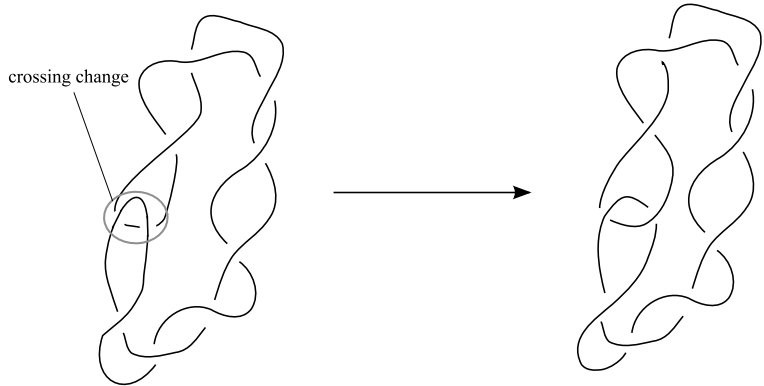}
\caption{\footnotesize{Twist knots are ubiquitous DNA knots. In the cell all DNA is supercoiled (like an over-used phone cord) so an unknot can be transformed to a twist knot by a single crossing change.}}
\label{supercoiledtotwist}
\end{center}
\end{figure}

\begin{figure}
  \centering
   \subfloat[][]{\label{family}\includegraphics[width=.35\textwidth]{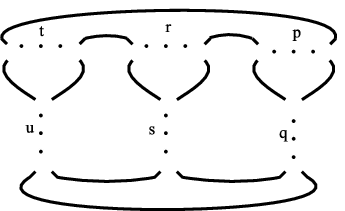}}
\hspace{0.5cm} 
  \subfloat[][]{\label{famsG1}\includegraphics[width=0.25\textwidth]{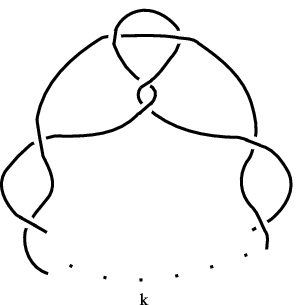}}
\hspace{0.5cm} 
  \subfloat[][]{\label{famsG2}\includegraphics[width=0.25\textwidth]{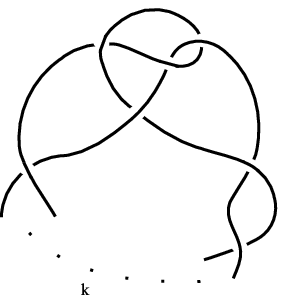}}
\caption{\footnotesize{All possible knots and links resulting from recombination on a twist knot must fall into one of these three families: (a) The family $F(p,q,r,s,t,u)$ of knots and links. Most knotted and linked products are in this family. (b) The family $G1$ of knots and links. (c) The family $G2$ of knots and links. For $K\in G1$ or $G2$, $k$ odd $\Rightarrow \ K$ is a knot and $k$ even $\Rightarrow$ $K$ is a two component link. }}
\label{alternative family}
\end{figure}

\begin{figure}
\begin{center}
\includegraphics[width=13cm]{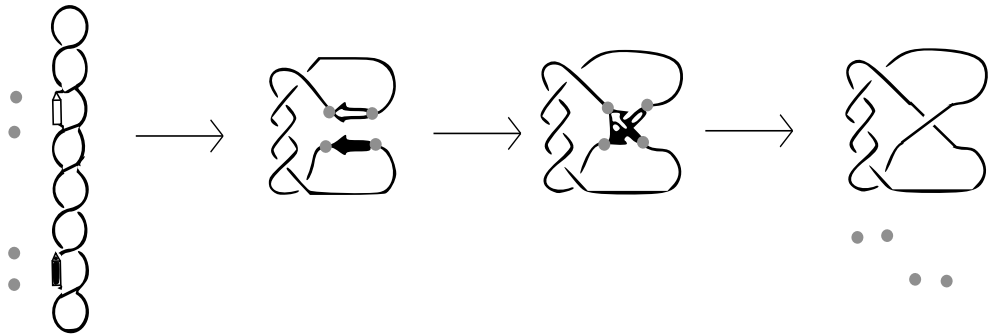}
\caption{\footnotesize{The line represents the axis of the double helix of the substrate DNA molecule. The recombinase dimers (grey circles) bind at each of the two specific sites (filled and hollow arrows) and the sites are brought together forming the synaptic complex with crossover sites juxtaposed (second image from the left). After cleaving, exchanging and resealing the DNA, the proteins dissociate completing the reaction.}}
\label{interstep}
\end{center}
\end{figure}

\begin{figure}
  \centering
    \subfloat[][]{\label{serinesbio}\includegraphics[width=1.1\textwidth]{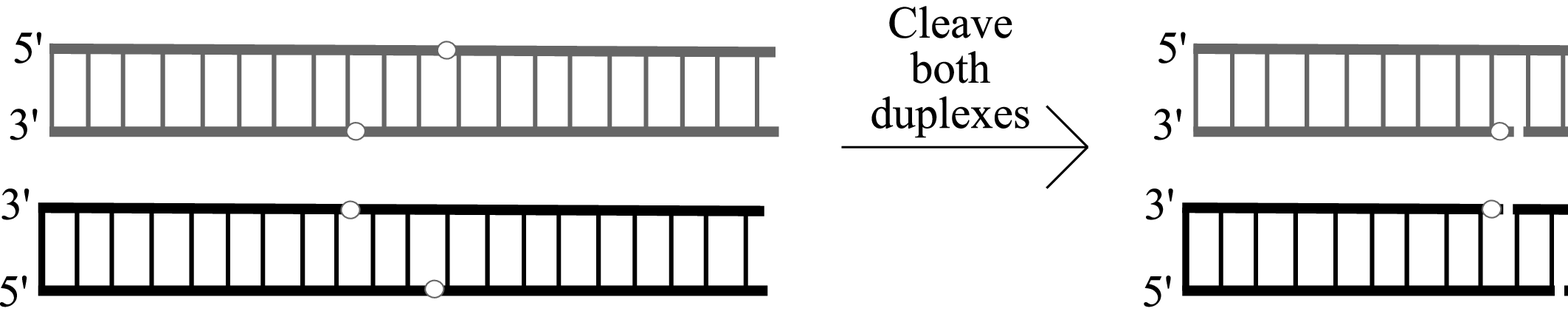}}
  \vspace{1cm}
  \subfloat[][]{\label{tyrosinesbio}\includegraphics[width=.8\textwidth]{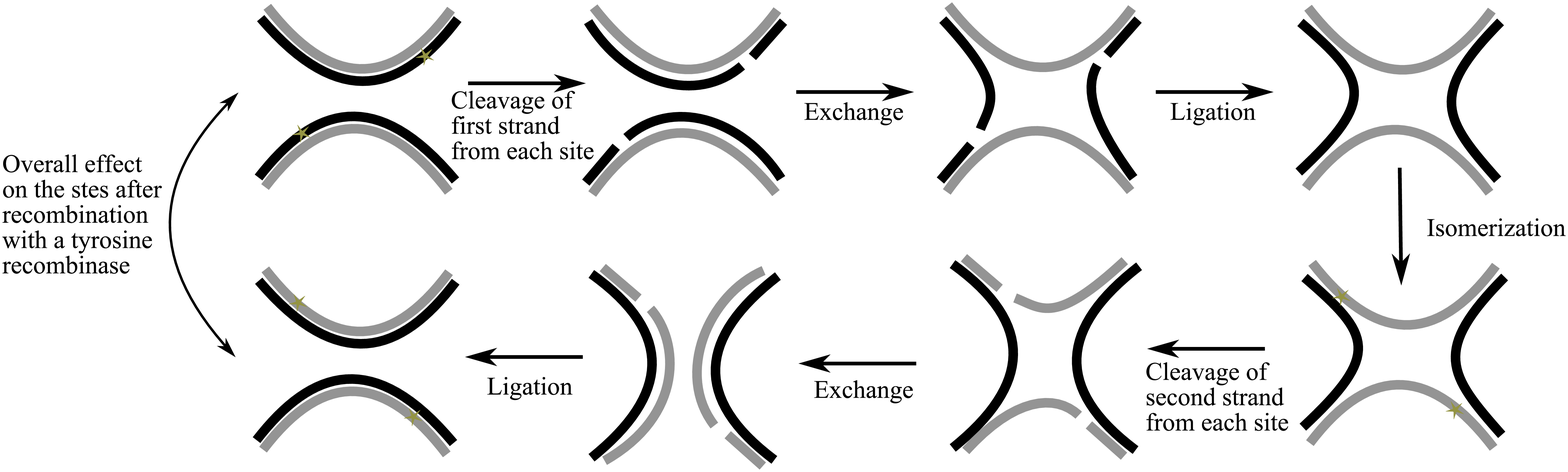}}
\caption{\footnotesize{(a). Biological Assumption 3: Serine recombinase. Serine recombinases perform simultaneous double-stranded breaks, rotate one half of the recombinase complex relative to the other by $180^{\circ}$ and rebind the DNA (b). Biological Assumption 3: Tyrosine recombinase. Tyrosine recombinases cleave one strand from each duplex, exchange the cleaved strands, and ligates them to form a Holliday junction (rightmost two panels). Isomerization of this junction alternates the catalytic activity and the same process happens with the other two DNA strands. These images are modifications of Figures 3 and 11 in \cite{bio}.}}
\label{bio}
\end{figure}

\begin{figure}
\begin{center}
\includegraphics[width=18cm]{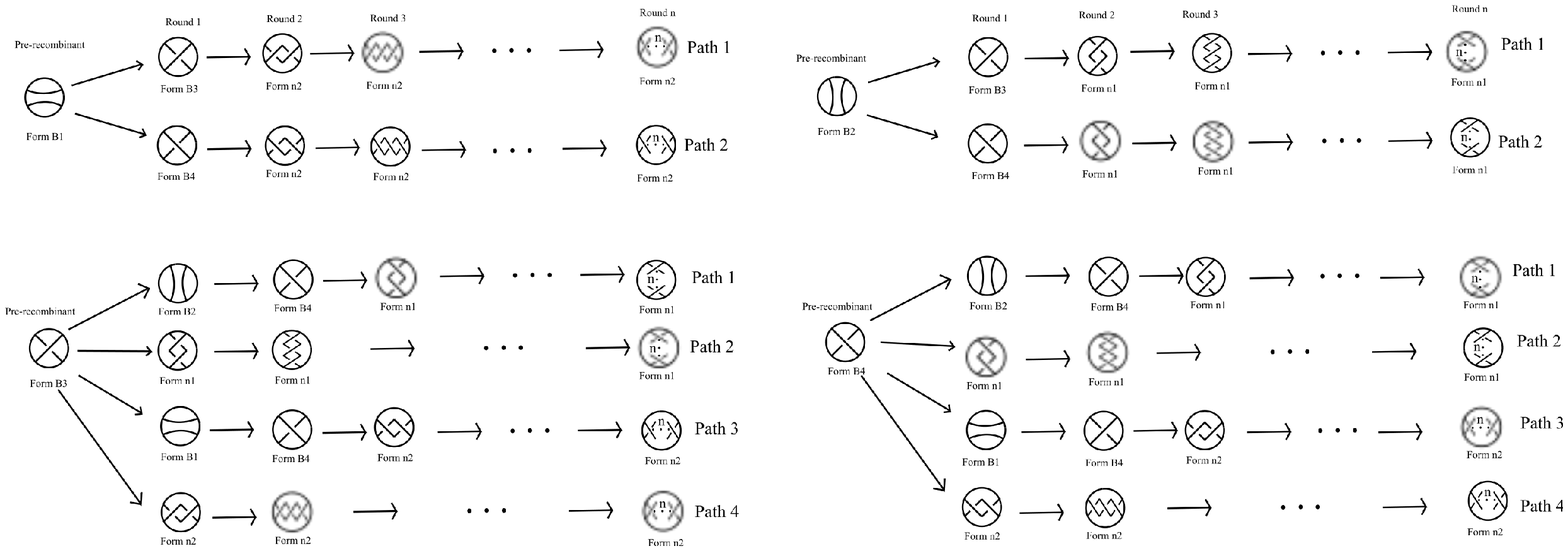} 
\caption{\footnotesize{Mathematical Assumption 3: Serine recombinases. Begin with all possible  projections of the pre-recombinant conformation of the recombinase complex, with zero or one crossings. Follow with projections of the post-recombinant conformations of the productive synapse at each round of processive recombination.}}
\label{serinesmaths}
\end{center}
\end{figure}

\begin{figure}
  \centering
  \subfloat[][$C(r,v)$]{\label{claspknot}\includegraphics[width=.15\textwidth]{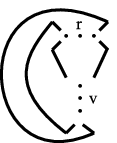}}
  \hspace{0.5cm}
  \subfloat[][C(2,v)]{\label{substrate}\includegraphics[width=.17\textwidth]{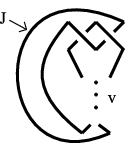}}
  \hspace{0.5cm}
  \subfloat[][Isotopy from $C(-2,v-1)$ to $C(2,v)$]{\label{substrateisotopy}\includegraphics[width=.4\textwidth]{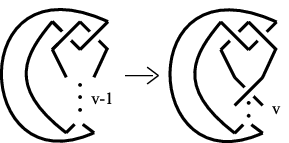}}
    \hspace{0.5cm}
  \subfloat[][]{\label{crossings}\includegraphics[width=.5\textwidth]{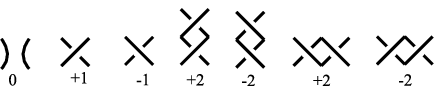}}
\caption{\footnotesize{Background terminology. (a) The clasp knot C(r,v) with two nonadjacent rows of crossings, one with $r \neq 0,±1$ crossings and the other with $v\neq 0$ crossings. (b) The substrate we consider here and in \cite{KDbio}, the twist knot $C(2, v)$. Note $r$ is now a \textit{hook} of 2 crossings. (c) A continuous deformation taking the twist knot $C(-2, v)$ to the twist knot $C(+2, v + 1)$. (d) Crossing sign convention used in this paper.}}
\label{defs}
\end{figure}

\begin{figure}
\begin{center}
\includegraphics[width=15cm]{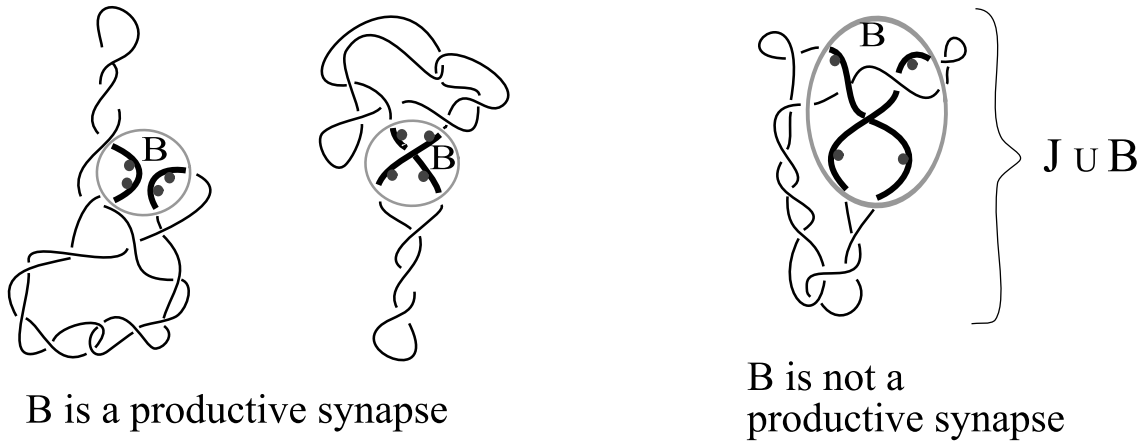} 
\caption{\footnotesize{Productive synapse. The thin black lines illustrate the central axis of the DNA molecule. We assume that the recombinase complex is a productive synapse.  $B$ (light grey circle) denotes the smallest convex region containing the four bound recombinase molecules (small grey discs) and the two crossover sites (highlighted in black). \textit{Left and middle:} $B$ is a productive synapse. \textit{Right:} $B$ is not a productive synapse. In this case we cannot draw $B$ such that only the two crossover sites are inside it without also including the third (horizontal, thin) strand.}}
\label{productivesynapse}
\end{center}
\end{figure}

\begin{figure}
\begin{center}
\includegraphics[width=10cm]{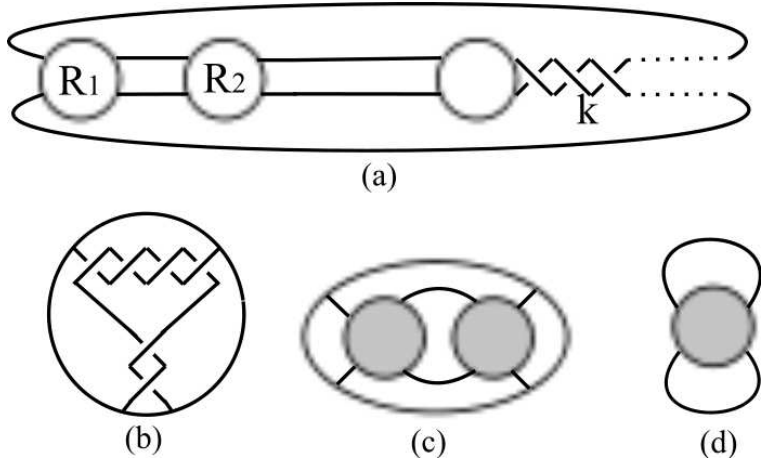} 
\caption{\footnotesize{(a). A Montesinos knot or link has a projection as illustrated here. The $R_i$ are rational tangles. (b) A rational tangle with alternating crossings. (c) The \textit{partial sum of two tangles}, refers to the tangle diagram resulting from the insertion of two tangle diagrams into the shaded discs. (d) Numerator closure of a tangle.}}
\label{montesinoss}
\end{center}
\end{figure}

\begin{figure}
\begin{center}
\includegraphics[width=18.4cm]{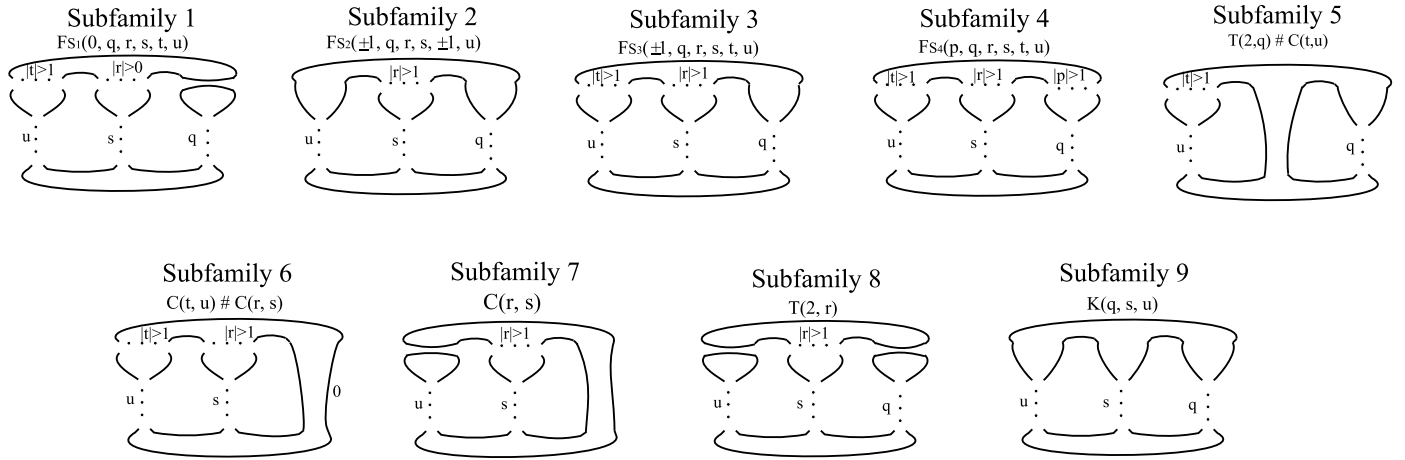} 
\caption{\footnotesize{Subfamilies of the family illustrated in Figure \ref{family}. The nine subfamilies obtained by setting $p, q, r,s,t$, and/or $u$ equal to $0$ or $\pm1$ in the family of knots and links $F(p,q,r,s,t,u)$. \textit{Top:} product subfamily $F_{S_1}(0,q,r,s,t,u)$ with $|r|,|t|>1$, product subfamily $F_{S_2}(\pm1,q,r,s,\pm1,u)$ with $|r|>1$, product subfamily $F_{S_3}(\pm1,q,r,s,t,u)$ with $|r|, |t|>1$, product family $F_{S_4}(p,q,r,s,t,u)$ with $|t|,|r|,|p|>1$, product subfamily of composite knots $T(2, u)\sharp C(p,q)$. \textit{Bottom:} product subfamily $F(-1,1,r,s,t,u)$ with $|t|,|r|>1$, product subfamily of clasp knots and links $C(r, s)$, product subfamily of torus knots and links $T(2,r)$, product subfamily of pretzel knots $K(p,s,u)$.}}
\label{subfamilies}
\end{center}
\end{figure}

\begin{figure}
\begin{center}
\includegraphics[width=8cm]{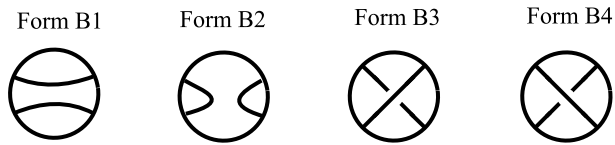} 
\caption{\footnotesize{Assumption 1:  Projections of the pre-recombinant productive synapse. Assumption 1 states that there is a projection of the pre-recombinant productive synapse with at most one crossing. Note that it does allow productive synapses like the hook, where there is a projection
with one crossing but no projections with zero crossings. Assumption 3 for tyrosine recombinases: After recombination with a tyrosine recombinase, the productive synapse has a projections with at most one crossing. }}
\label{ass1}
\end{center}
\end{figure}

\begin{figure}
\begin{center}
\includegraphics[width=7cm]{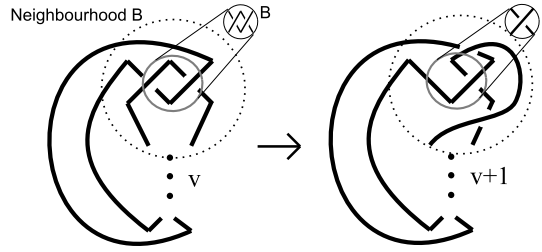} 
\caption{\footnotesize{By continuously deforming $B\cap J$ within a neighbourhood of the hook, we can obtain a projection of the hook with exactly one crossing. This affects the projection of the rest of the substrate only by adding one positive crossing to the row of $v$ crossings.}}
\label{neighbourhood of B inter J}
\end{center}
\end{figure}

\begin{figure}
\begin{center}
\includegraphics[width=7cm]{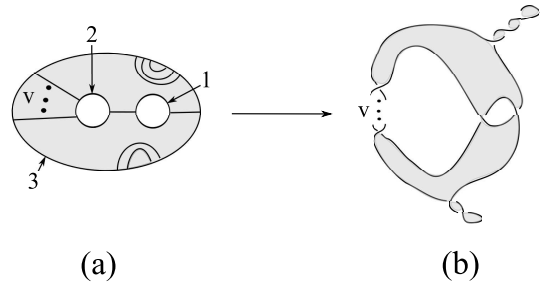}
\caption{\footnotesize{Obtain a \textit{planar surface with twists} by replacing a neighborhood of each arc by a half-twisted band. Here our planar surface is a doubly punctured disc and our non-planar surface with twists is a surface whose boundary is the twist knot $C(2,v)$.}}
\label{plannarsurface} 
\end{center}
\end{figure}

\begin{figure}
\begin{center}
\includegraphics[width=7cm]{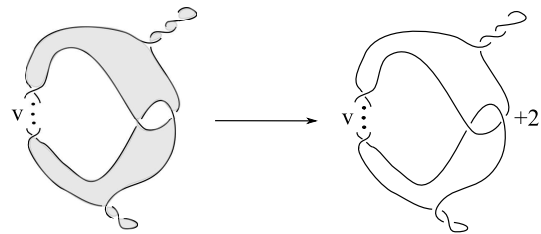}
\caption{\footnotesize{A surface $D$ with boundary $J$ is a \textit{spanning surface} for $J$ if $D$ is topologically equivalent to a doubly-punctured planar disc with twists when $J$ is a twist knot $C(2,v)$.}}
\label{spanningsurfacee} 
\end{center}
\end{figure}

\begin{figure}
\begin{center}
\includegraphics[width=10cm]{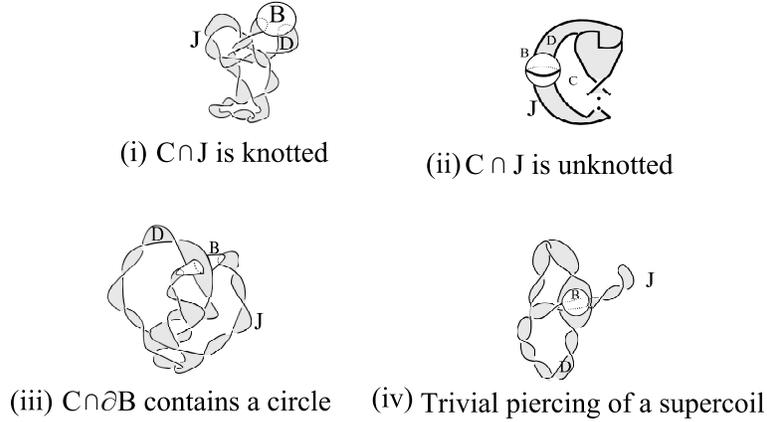}
\caption{\footnotesize{(i) A knot is trapped in the DNA branches outside of $B$. (ii) An unknotted substrate with the synaptic complex  formed. (iii) The recombinase complex pierces a supercoil in a non-trivial way, $D\cap \partial B$ contains at least one circle as well as two arcs. (iv) The productive synapse $B$ trivially pierces through a supercoil and the circle contained in $D\cap B$ can be removed via an isotopy of $C$. Scenarios (i) and (iv) are allowed by our assumptions, the other two scenarios are not.}}
\label{pierce} \label{knotted v unknoted} 
\end{center}
\end{figure}

\begin{figure}
\begin{center}
\includegraphics[width=15cm]{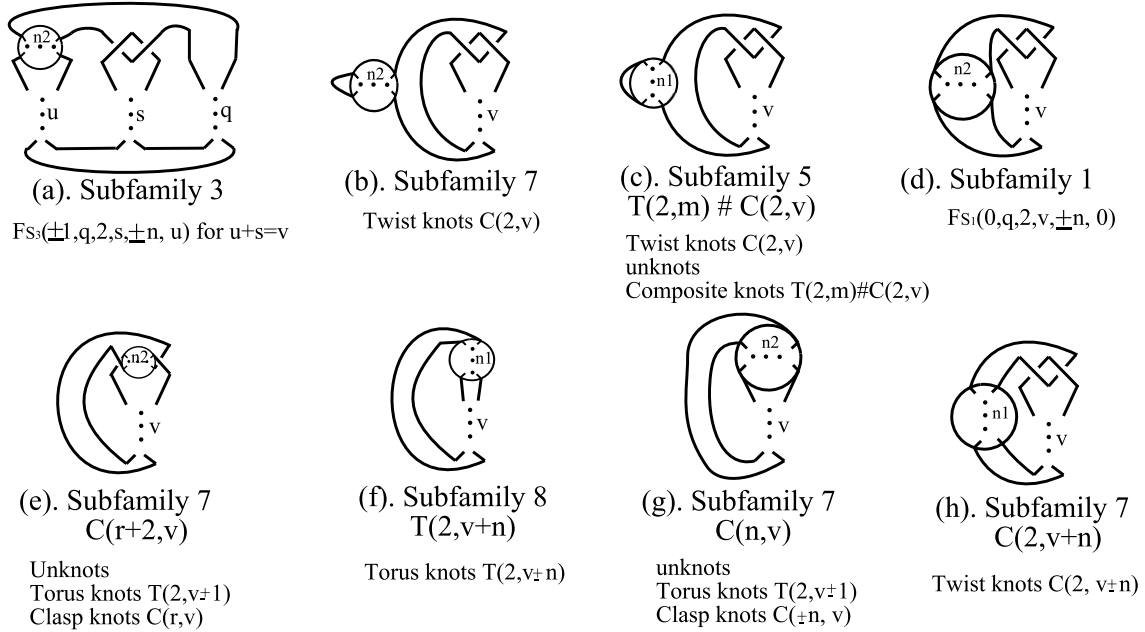}
\caption{\footnotesize{Products of recombination with serine recombinases, Theorem 2: All possible projections of the post-recombinant conformation of the recombinase-DNA complex $J\cup B$ and the productive synapse $B$ after $n$ rounds of processive recombination with a serine recombinase. The images inside the circles denote forms $n1$ and $n2$ of $B$ after $n$ rounds of processive recombination.}}
\label{serine products}
\end{center}
\end{figure}

\begin{figure}
\begin{center}
\includegraphics[width=15cm]{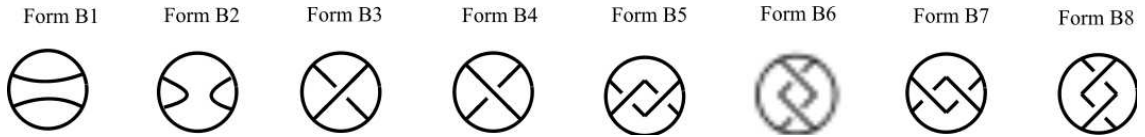} 
\caption{\footnotesize{Mathematical Assumption 3: Tyrosine recombinases. Projections of the possible post-recombinant conformations of the recombinase complex. Hooks are allowed because they have projections with at most one crossing between the sites.}}
\label{tyrosinesmaths}
\end{center}
\end{figure}

\begin{figure}
\begin{center}
\includegraphics[width=18cm]{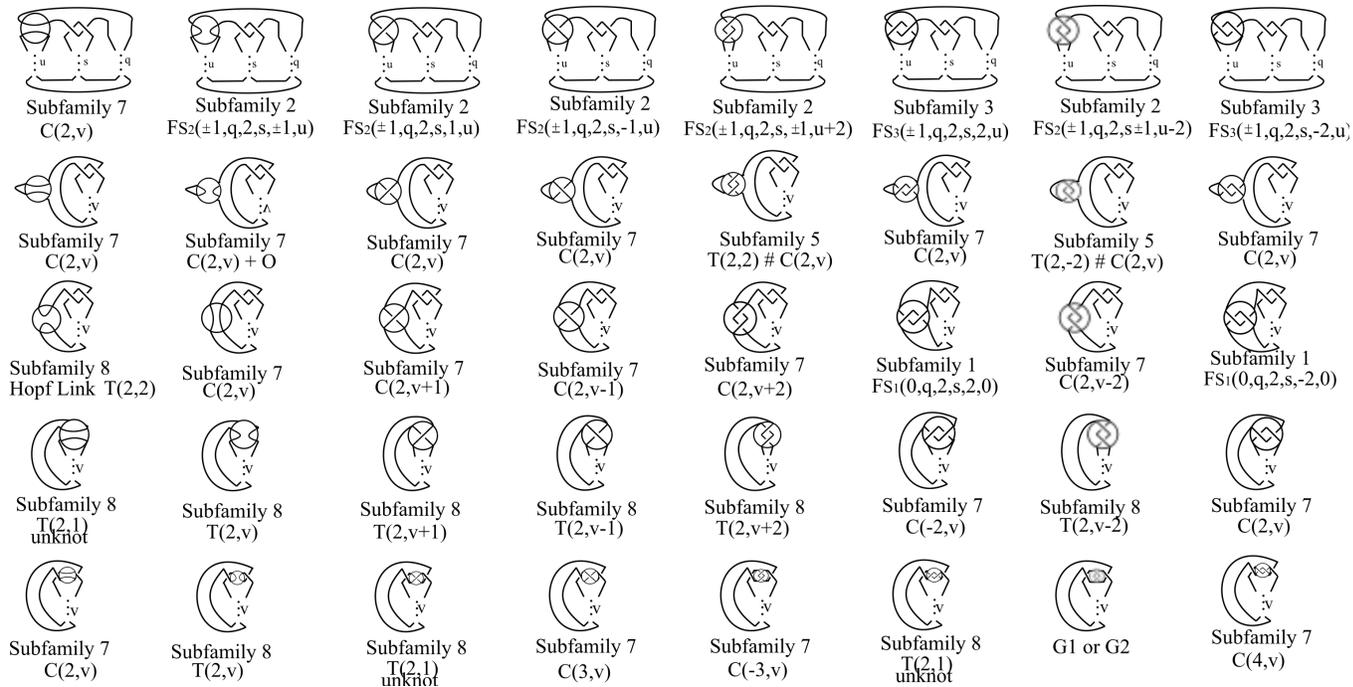}
\caption{\footnotesize{Products of recombination with tyrosine recombinases. Theorem 1: Projections of all possible conformations of the post-recombinant recombinase-DNA complex $J\cup B$ and the productive synapse $B$ after a reaction with a twist knot substrate $C(2,v), v\neq0$ mediated by a tyrosine recombinase.} }
\label{tyrosine products}
\end{center}
\end{figure}

\begin{figure}
\begin{center}
\includegraphics[width=18cm]{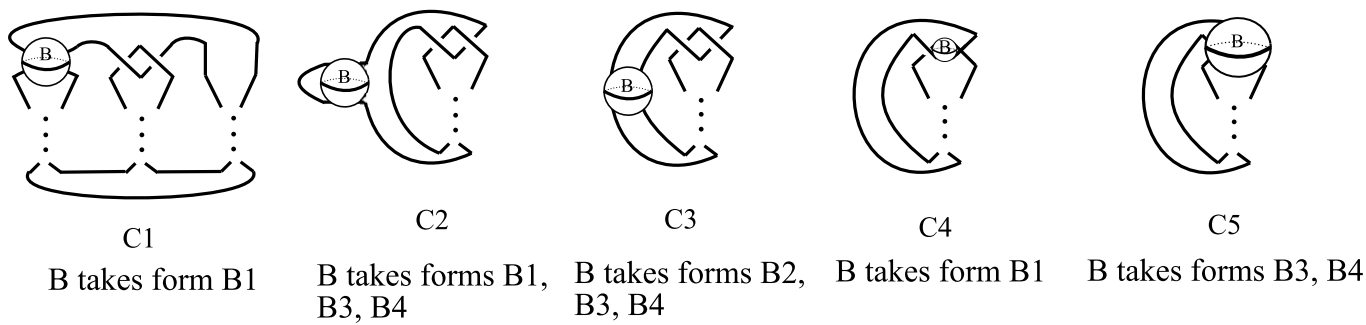} 
\caption{\footnotesize{Summary of Lemma 1. All the possible distinct forms that the substrate molecule $C\cap J$ can take, up to isotopy, along with the corresponding forms of $B$ for each form of $C\cap J$.}}
\label{all the possibilities for substrate}
\end{center}
\end{figure}

\begin{figure}
\begin{center}
\includegraphics[width=8cm]{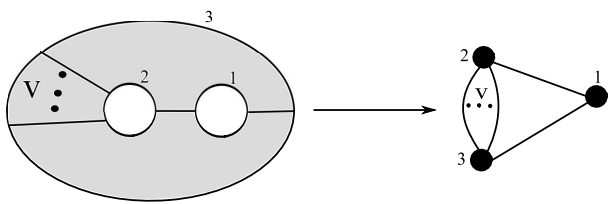} 
\caption{\footnotesize{A thrice-punctured $S^2$ in $S^3$, with arcs connecting the three punctures can be regarded as  a graph with three points and a collection of edges connecting them.}}
\label{S2}
\end{center}
\end{figure}

\begin{figure}
\begin{center}
\includegraphics[width=14cm]{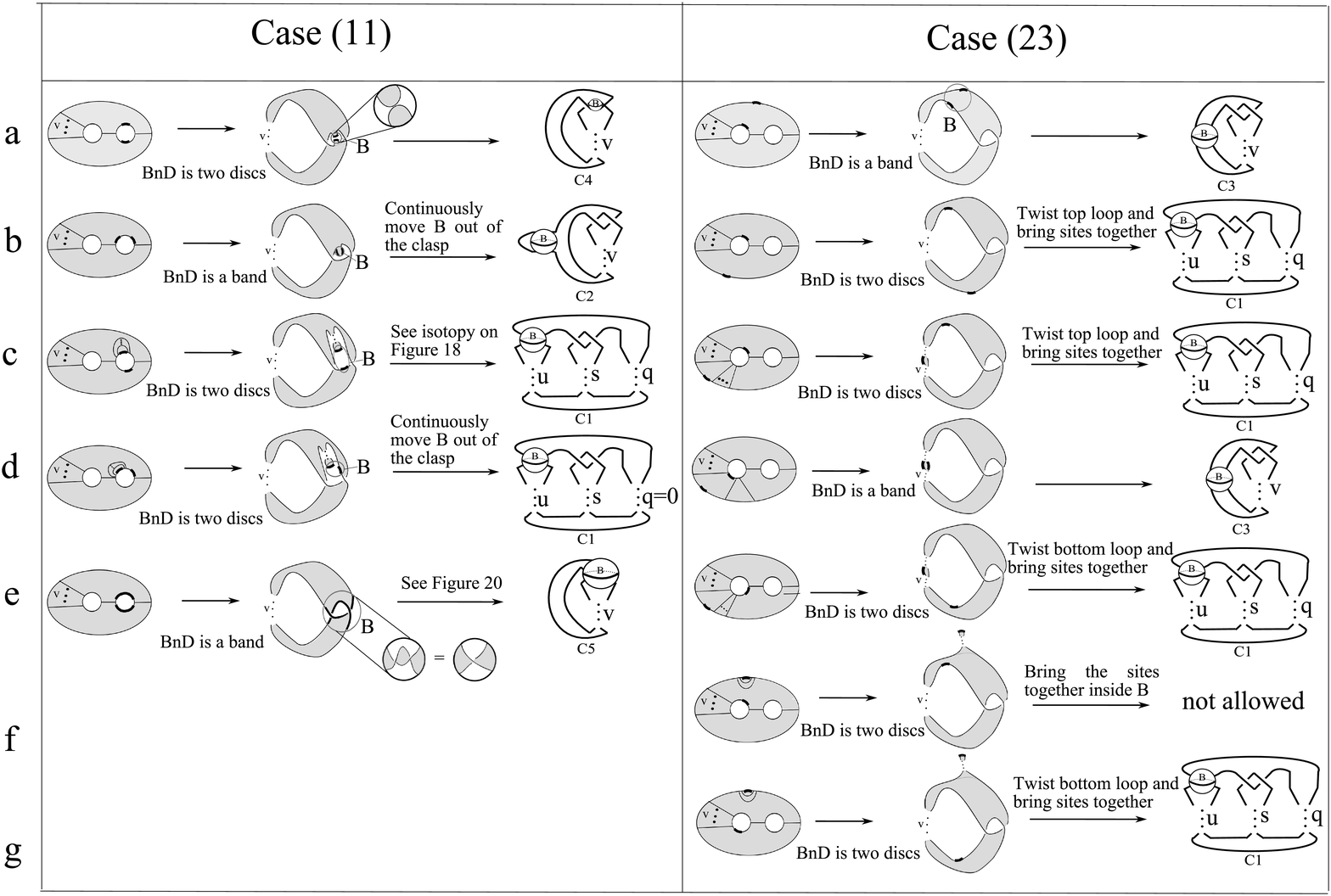} 
\caption{\footnotesize{Characterization of the recombinase-DNA complex. Specific sites situated either \textit{Case (11):} both on the boundary of puncture 1 of $S^2$, \textit{Case (23):} one on the boundary of puncture 2 of $S^2$ and the other on the boundary of puncture 3 of $S^2$. In cases where the right-most column says `not allowed', we mean that we can not allow such a conformation because when bringing the two specific sites together inside $B$, we get $C\cap D$ non-planar, which is not allowed by assumption 2.}}
\label{cases1}
\end{center}
\end{figure}

\begin{figure}
\begin{center}
\includegraphics[width=14cm]{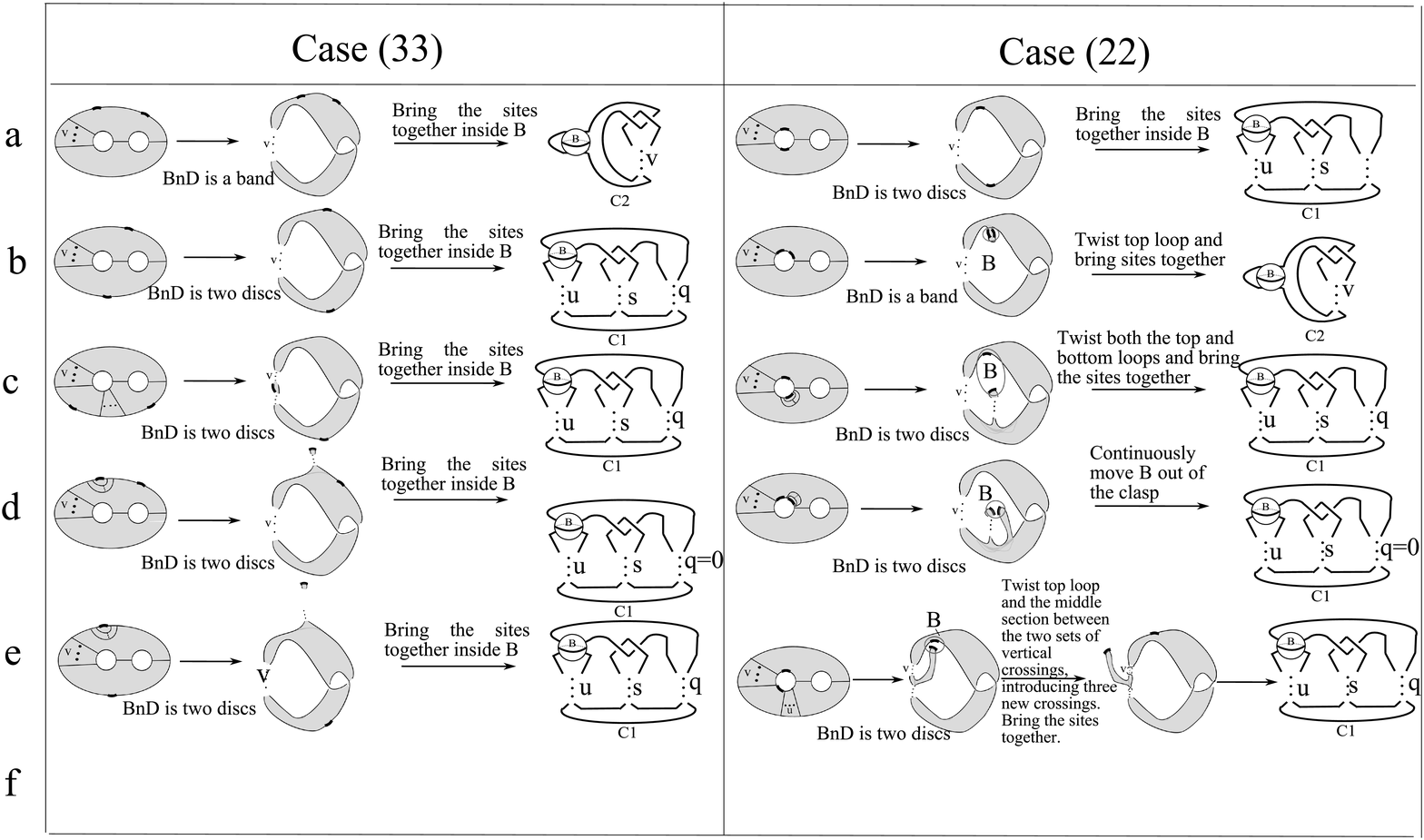} 
\caption{\footnotesize{Characterization of the recombinase-DNA complex. Specific sites situated either \textit{Case (22):} both on the boundary of puncture 2 of $S^2$,  \textit{Case (33):} both on the boundary of puncture 3 of $S^2$. Note that cases (22) and (33) are equivalent, but we consider both here because it may be more straightforward to visualise the isotopy of $C \cap J$ to one of the standard forms in one case or the other. }}
\label{cases3}
\end{center}
\end{figure}

\begin{figure}
\begin{center}
\includegraphics[width=16cm]{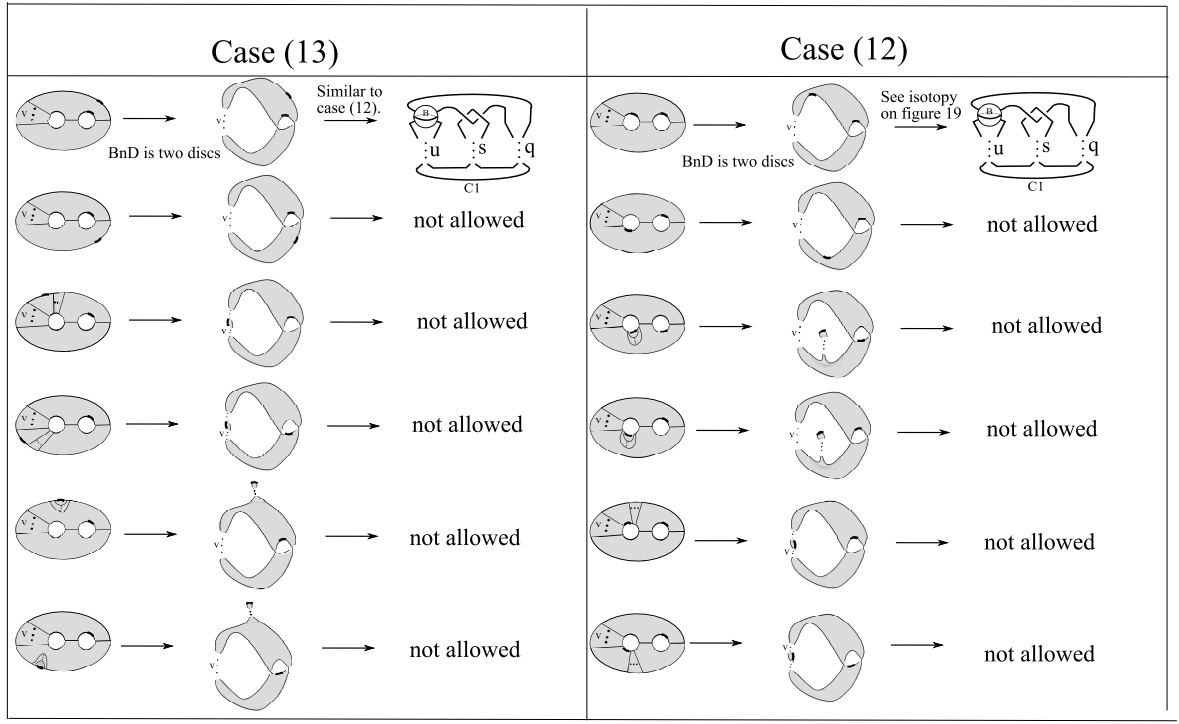} 
\caption{\footnotesize{Characterization of the recombinase-DNA complex. Specific sites situated either \textit{Case (12):} one on the boundary of puncture 1 of $S^2$ and the other on the boundary of puncture 2 of $S^2$, \textit{Case (13):} one on the boundary of puncture 1 of $S^2$ and the other on the boundary of puncture 3 of $S^2$. In cases where the right-most column says `not allowed', we mean that we can not allow such a conformation because when bringing the two specific sites together inside $B$, we get $C\cap D$ non-planar, which is not allowed by assumption 2. Note that cases (12) and (13) are equivalent, but we consider both here because it may be more straightforward to visualise the isotopy of $C \cap J$ to one of the standard forms in one case or the other. }}
\label{cases2}
\end{center}
\end{figure}

\begin{figure}
\begin{center}
\includegraphics[width=18cm]{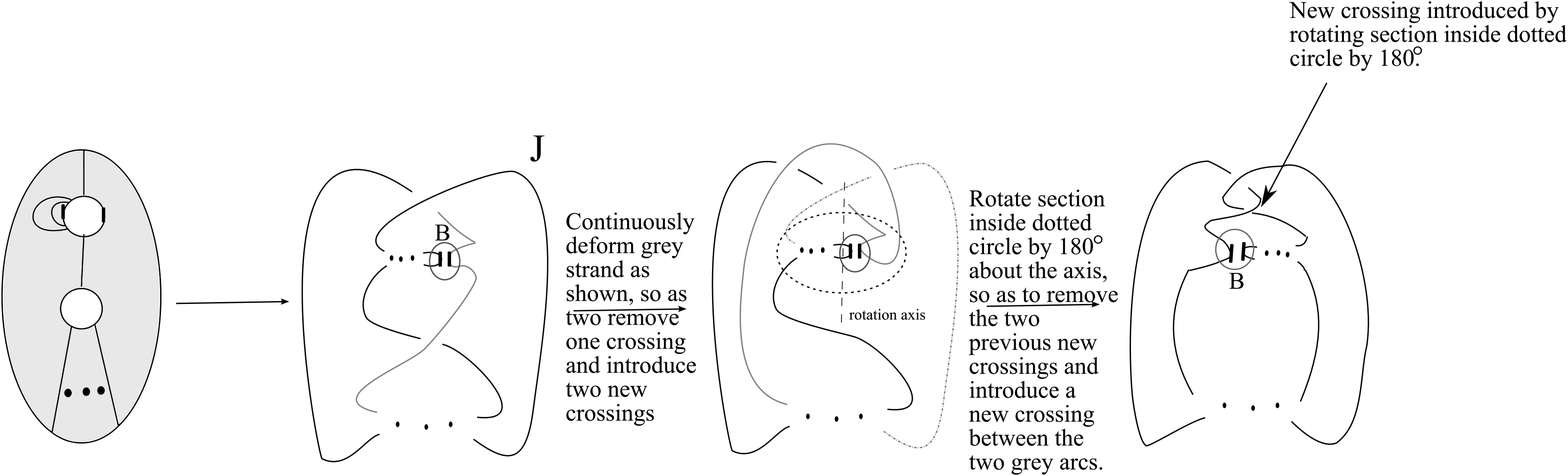} 
\caption{\footnotesize{Isotopy for \textit{Case (11c):} Assume both arcs of $D\cap\partial B$ lie on boundary 1 of the thrice-punctured $S^2\subset S^3$. The thrice-punctured $S^2$ generates the spanning surface $D$ that has boundary illustrated on the second image from the left. Next, a continuous deformation taking this conformation of $J$ to form C1.}}
\label{deformation11c}
\end{center}
\end{figure}

\begin{figure}
\begin{center}
\includegraphics[width=12cm]{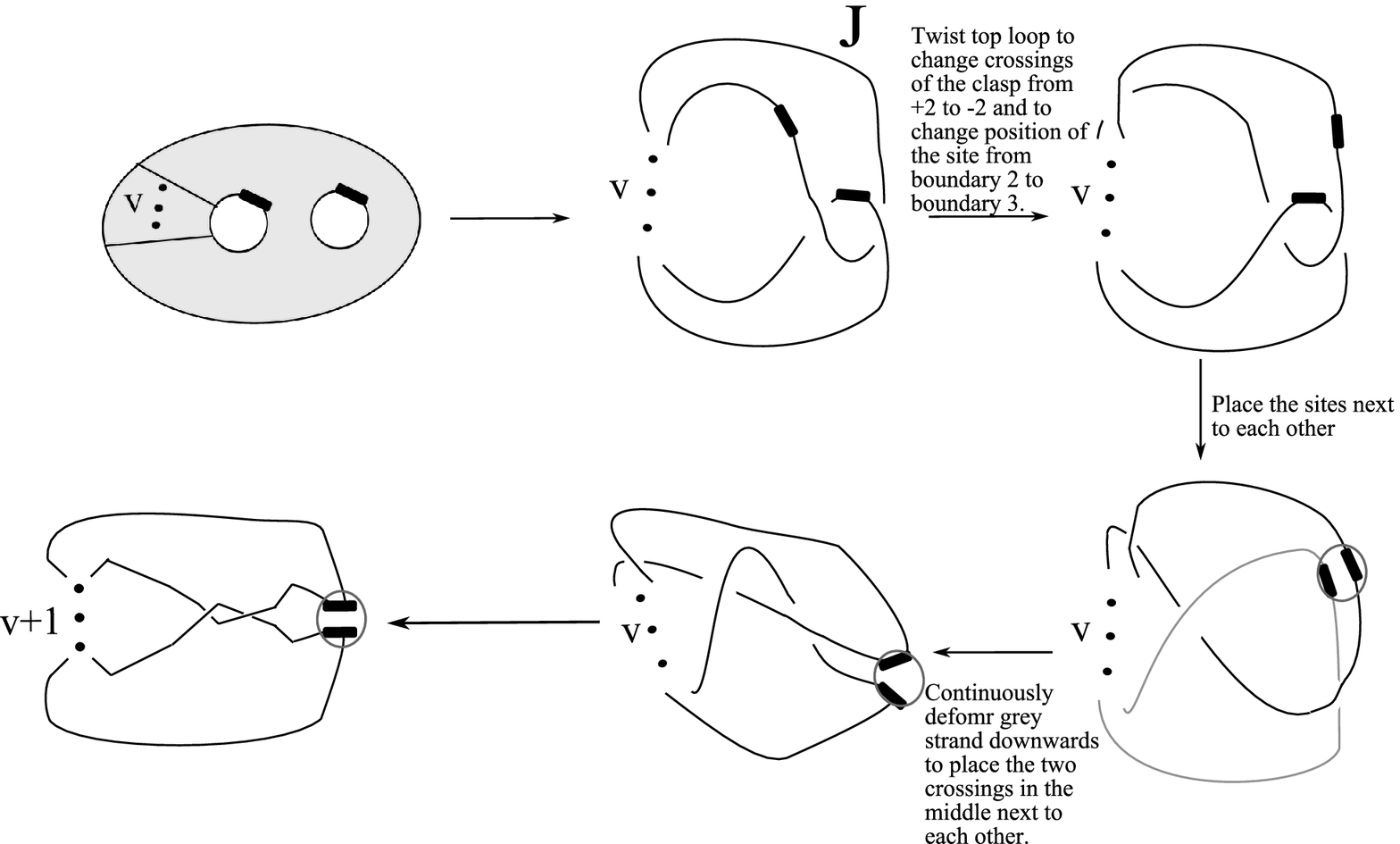} 
\caption{\footnotesize{Isotopy for \textit{Case (12a):} Assume both arcs of $D\cap\partial B$ lie on boundary 1 of the thrice-punctured $S^2\subset S^3$. The thrice-punctured $S^2$ generates the spanning surface $D$ that has boundary illustrated on the second image from the left. Next, a continuous deformation taking this conformation of $J$ to form C1.}}
\label{deformation12a}
\end{center}
\end{figure}

\begin{figure}
\begin{center}
\includegraphics[width=18.3cm]{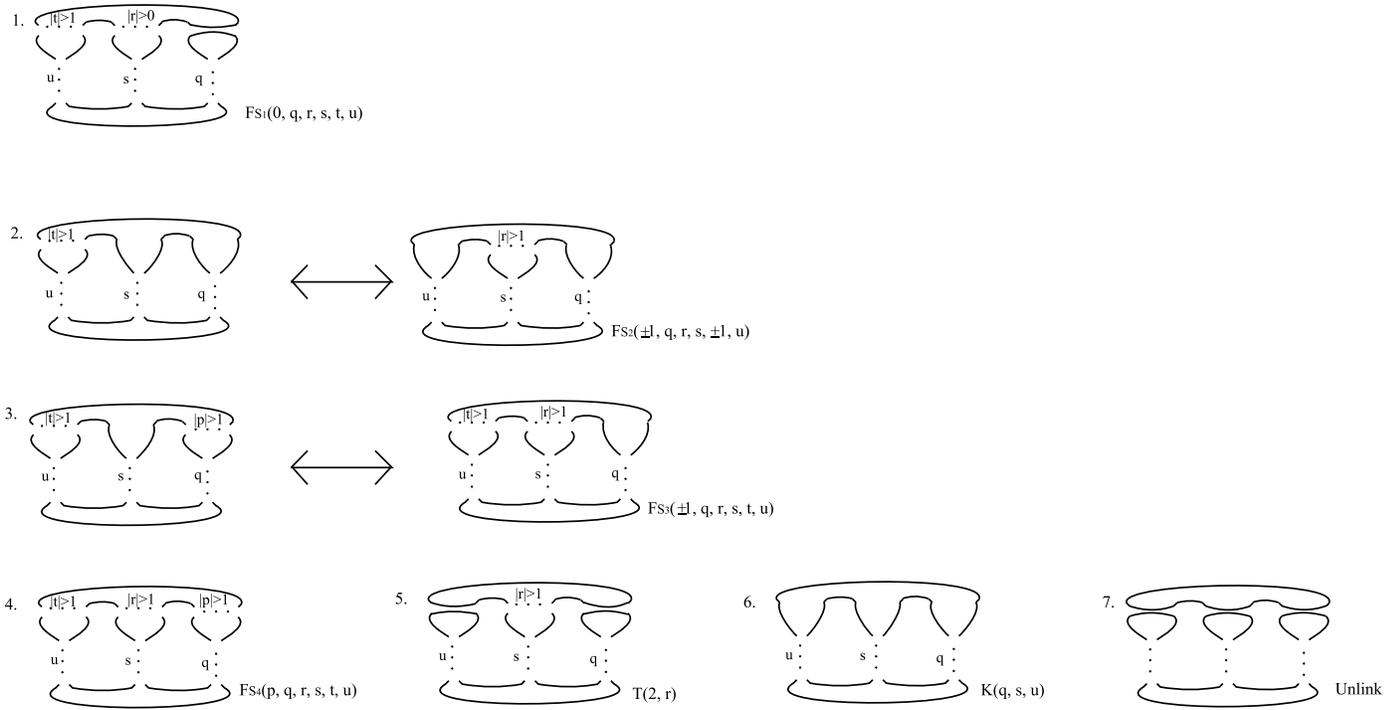} 
\caption{\footnotesize{Proof of Theorem 4: Our family in Figure \ref{family} was broken down into these subfamilies to be able to compute Tables 2,3,4 and 5.}}
\label{counting}
\end{center}
\end{figure}

\begin{figure}
\begin{center}
\includegraphics[width=15cm]{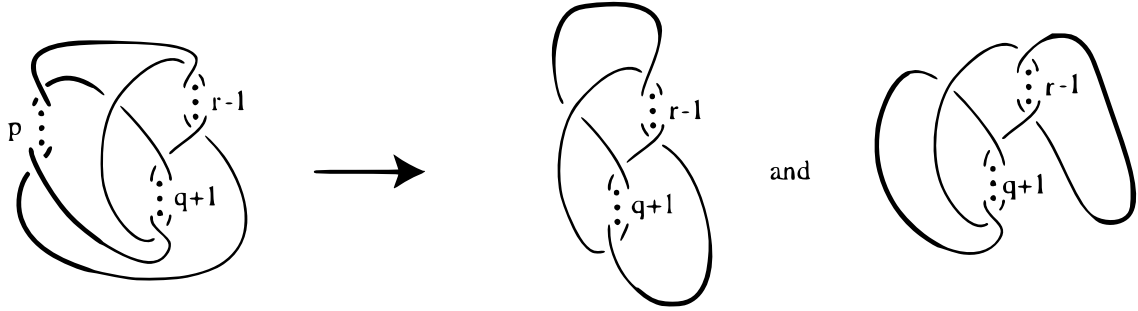} 
\caption{\footnotesize{Haya-Yamamoto: A projection of a knot or link is Hara-Yamamoto if when we cut off the row of $p$ crossings on the left and reseal the strands in the two natural ways then both resulting projections are reduced alternating.}}
\label{HY}
\end{center}
\end{figure}

\begin{figure}
\begin{center}
\includegraphics[width=15cm]{} 
\caption{\footnotesize{Example of strand movement: By moving the two strands, we reduce from $|r|+|u|+|q|$ crossings originally, to $|r|+|u|+|q|-2$ crossings in the alternating diagram.}}
\label{strandmovement}
\end{center}
\end{figure}

\end{document}